\newtheorem{remark}{Remark}
\newtheorem{theorem}{Theorem}
\newtheorem{definition}{Definition}
\newtheorem{lemma}{Lemma}
\newtheorem{assumption}{Assumption}
\newtheorem{proposition}{Proposition}
\begin{document}
%
\title{{\huge Open-loop and Closed-loop Local and Remote Stochastic Nonzero-sum Game with Inconsistent Information Structure }}
%
%
%
%
\author{Xin Li, Qingyuan Qi$^{*}$ and Xinbei Lv
\thanks{This work was supported by National Natural Science Foundation of China under grants 61903210, Natural Science Foundation of Shandong Province under grant ZR2019BF002, China Postdoctoral Science Foundation under grant 2019M652324, 2021T140354, Qingdao Postdoctoral Application Research Project, Major Basic Research of Natural Science Foundation of Shandong Province (ZR2021ZD14).(Corresponding author: Qingyuan Qi.)

X. Li (lx998866@163.com) is with Institute of Complexity Science, College of Automation, Qingdao University, Qingdao, China 266071. Q. Qi (qingyuan.qi@hrbeu.edu.cn) and X. Lv (lvxinbei@hrbeu.edu.cn) are with Qingdao Innovation and Development Center of Harbin Engineering University, Qingdao, China, 266000.
}}
\maketitle



%
\IEEEpeerreviewmaketitle

\begin{abstract}
In this paper, the open-loop and closed-loop local and remote stochastic nonzero-sum game (LRSNG) problem is investigated. Different from previous works, the stochastic nonzero-sum game problem under consideration is essentially a special class of two-person nonzero-sum game problem, in which the information sets accessed by the two players are inconsistent. More specifically, both the local player and the remote player are involved in the system dynamics, and the information sets obtained by the two players are different, and each player is designed to minimize its own cost function. For the considered LRSNG problem, both the open-loop and closed-loop Nash equilibrium are derived. The contributions of this paper are given as follows. Firstly, the open-loop optimal Nash equilibrium is derived, which is determined in terms of the solution to the forward and backward stochastic difference equations (FBSDEs). Furthermore, by using the orthogonal decomposition method and the completing square method, the feedback representation of the optimal Nash equilibrium is derived for the first time. Finally, the effectiveness of our results is verified by a numerical example.
\end{abstract}

\begin{IEEEkeywords}
Discrete-time stochastic nonzero-sum game, inconsistent information structure, open-loop and closed-loop Nash equilibrium, maximum principle.
\end{IEEEkeywords}

\def\s{\small}
\def\n{\normalsize}
\def\+{\!+\!}
\def\-{\!-\!}
\def\={\!=\!}
\def\E{\mathbb{E}}
\def\P{\mathbb{P}}
\def\Tr{\mathrm{Tr}}
\def\b{\color{blue}}
\def\r{\color{red}}

\section{Introduction}

As is well known, due to the wide applications in industry, economics, management \cite{djl2000,cl2002}, the differential games have received many scholars' interest since 1950s, and abundant research results have been obtained, see \cite{i1965,ek1972,i1999,zp2022}. As a special case of differential games, the linear quadratic (LQ) non-cooperative stochastic game is a hot research topic, in view of its rigorous solution and the potential application background. For the LQ non-cooperative stochastic game problem, the system dynamics is described with a linear stochastic differential/difference equation, and the quadratic utility functions are to be minimized. It is worth mentioning that the open-loop and closed-loop Nash equilibrium were proposed in \cite{sy2019} for the continuous-time stochastic LQ two-person nonzero-sum differential game, it was shown that the existence of the open-loop Nash equilibrium is characterized by the forward-backward stochastic differential equations, and the closed-loop Nash equilibrium is characterized by the Riccati equations. Besides, references \cite{sjz2012a,sjz2012b} studied the discrete-time LQ non-cooperative stochastic game problem, and the Nash equilibrium was derived. Please refer to \cite{r2007,h1999,sy2019,slz2011,sjz2012a,sjz2012b,z2018,szl2021,cyl2022} and the cited references therein for the recent study on the LQ non-cooperative stochastic game.

It is noted that the previous works \cite{h1999,sy2019,slz2011,sjz2012a,sjz2012b,z2018,szl2021,cyl2022} on LQ stochastic game mainly focused on the case of the information structure being consistent, i.e., the two players share the same information sets. While, the information inconsistent case remains less investigated, i.e., the information sets obtained by the two players are different. As pointed out in \cite{nglb2014}, the asymmetry of information among the controllers makes it difficult to compute or characterize Nash equilibria. In fact, due to the inconsistent of the information structure, the two players are coupled with each other and finding the Nash equilibrium strategy becomes hard. Therefore, the study of the LQ non-zero stochastic game is challenging from the theoretical aspect.

In this paper,  a special case of LQ stochastic two-person nonzero-sum differential games with inconsistent information shall be studied, which is called the local and remote stochastic nonzero-sum game (LRSNG) problem. The detailed description of the networked system under consideration is shown in Figure \ref{Figure1}. It can be seen that the two players (the local player and the remote player) are included, and the uplink channel from the local player to the remote player is unreliable, while the downlink channel from the remote player to the local player is perfect. In other words, the precise state information can only be perfectly observed by the local player, and the remote player can just receive the disturbed state information delivered from the local player due to the unreliable uplink channel. Hence, the information sets accessed by the local player and the remote player are not the same, which is called inconsistent information structure. The objective of the two players is that each player should minimize its own quadratic cost function. Specifically, the two players are non-cooperative, therefore, the above problem is actually a LQ two-person nonzero-sum differential game with inconsistent information (LRSNG problem), and the goal of this paper is to find the Nash equilibrium associated with the LRSNG problem.

Actually, the corresponding local and remote control problem was originally studied in \cite{ott2016}, in which two controllers (the local controller and the remote controller) were cooperative, and the goal was to derive the jointly control law to optimize one common cost function. The optimal local and remote control problem is a decentralized control problem with asymmetric information structure, since the local controller and the remote controller can access different information sets, and the recent research results on the local and remote control can be found in \cite{qxz2020,lx2018,lqz2021,aon2019,tyw2022}. For instance, \cite{lx2018} investigated the optimal local and remote control problem, and a necessary and sufficient condition for the finite horizon optimal control problem was given by the use of maximum principle. In \cite{aon2019}, the local and remote control problem with multiple subsystems was studied, by using the common information approach, the optimal control strategy of the controller was obtained.
\begin{figure}[htbp]
  \centering
  \includegraphics[width=0.38\textwidth]{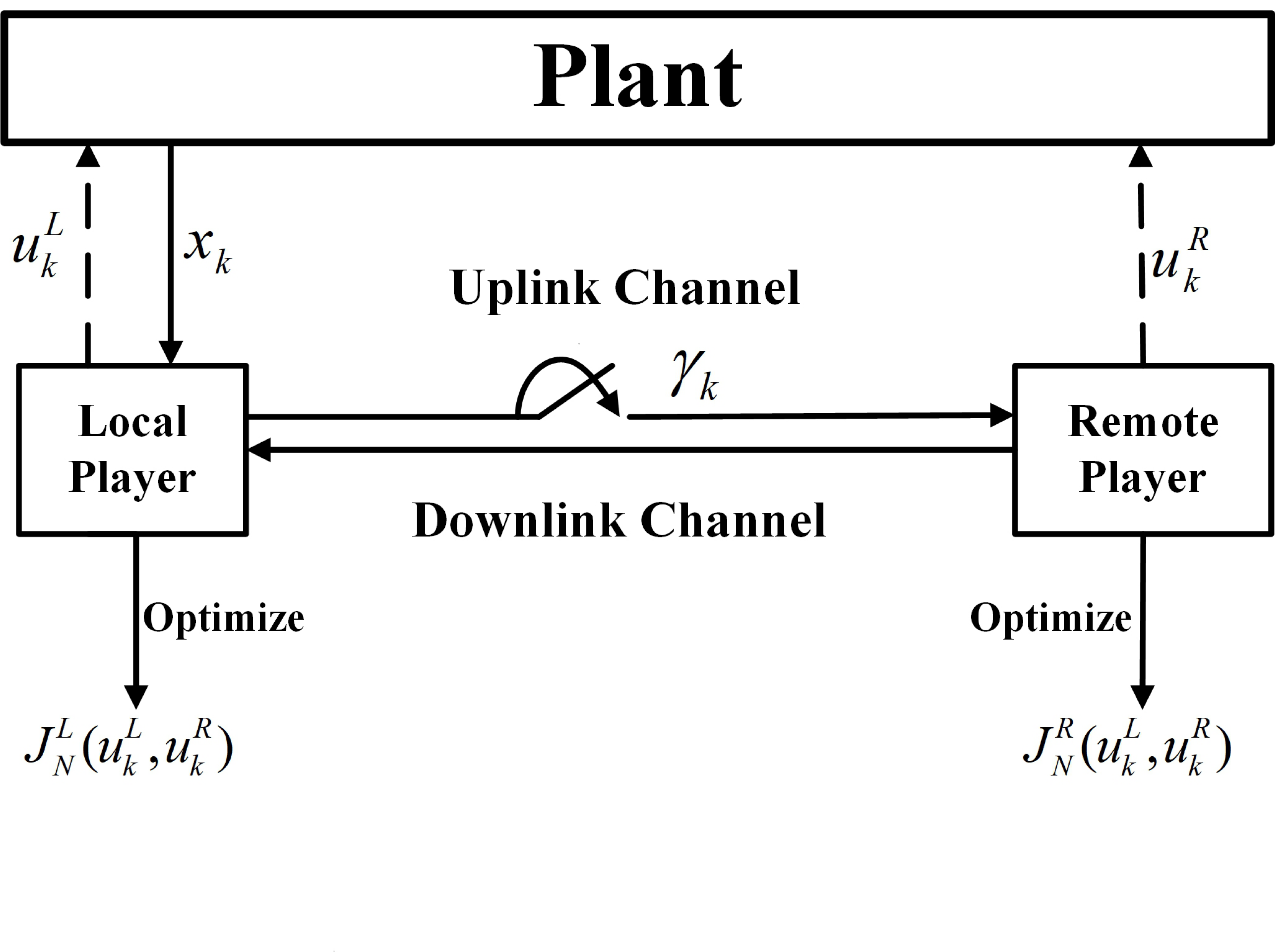}
  \caption{ Description of the LRSNG problem with inconsistent information structure.}
  \label{Figure1}
\end{figure}

While, the LRSNG problem studied in this paper is essentially different with the previous works \cite{qxz2020,lx2018,lqz2021,aon2019,tyw2022} on the optimal local and remote control in the following aspects: Firstly, the two players for the LRSNG problem need to be designed respectively to optimize their own cost function, i.e., they are non-cooperative. While, the two controllers for the local and remote control problem are cooperative, which is derived to minimize a common cost function. Secondly, although the local and remote control problem has been well studied, the LRSNG problem has not been solved before.

It is stressed that the study of the LRSNG problem is significant both from the theoretical and application aspects. From the theoretical point of view, as pointed out earlier, the LRSNG problem has not been solved before in view of the challenges from the inconsistent information structure. Furthermore, the LRSNG can be potentially applied in unmanned systems, manufacturing systems and autonomous vehicles, smart grid, remote surgery, etc., see \cite{lx2018,aon2019,hv2000,gc2010,hnx2007,lczsm2014}, and the cited references therein.

As analyzed above, the inconsistent information structure makes that the two players are coupled with each other, hence finding the Nash equilibrium for LRSNG problem becomes difficult. To overcome this challenge, the maximum principle and orthogonal decomposition approach are adopted in this paper to solve the LRSNG problem, and the open-loop and closed-loop Nash equilibrium are derived. Firstly, by the use of the convex variational method, the Pontryagin maximum principle is derived. Moreover, the necessary and sufficient conditions for the existence of the open-loop optimal Nash equilibrium for the LRSNG problem are derived, which are based on the solution of FBSDEs from the maximum principle. Consequently, in order to find a feedback explicit Nash equilibrium strategy, the orthogonal decomposition method and the completing square approach are applied, then the closed-loop optimal Nash equilibrium strategy for the LRSNG problem is derived for the first time, which is based on the solution to modified coupled Riccati equations. Finally, a numerical example is given to illustrate the main results of this paper.

In this paper, a special class of LQ non-cooperative stochastic game with inconsistent information (i.e., LRSNG) is firstly solved. The contributions of this paper are twofold. On the one hand, the open-loop Nash equilibrium for LRSNG problem is solved, the necessary and sufficient solvability conditions are derived. On the other hand, the closed-loop Nash equilibrium for LRSNG is developed, and the optimal feedback Nash equilibrium strategy is shown to rely on the solution to coupled Riccati equations.

The remainder of the paper is organized as follows. The LRSNG Problem is formulated in Section II. Section III solves the open-loop Nash equilibrium, and the solvability conditions are investigated. Section IV is devoted to solving the closed-loop Nash equilibrium for LRSNG problem. In Section V, the effectiveness of the main results is illustrated by numerical examples. The paper is concluded in Section VI.

For convenience, we will use the following notations throughout the paper. $\mathbb{R}^n$ signifies the $n$-dimensional Euclidean space. $I_n$ presents the unit matrix of $n\times n$ dimension. $A^\mathrm{T}$ denotes the transpose of the matrix $A$. $\mathcal F(X)$ denotes the $\sigma$-algebra generated by the random variable $X$. $A$ $\geq 0$ $(>0)$ means that $A$ is a positive semi-definite (positive definite) matrix. $Tr(A)$ represents the trace of matrix A. $\mathbb{E}[X]$ is the mathematical expectation, and $\mathbb{E}[X|\mathcal F_k]$ means the conditional expectation with respect to $\mathcal F_k$. The superscripts $L$, $R$ denote the local player and the remote player, respectively. $\inf$ means the infimum or the greatest lower bound of a set.
\section{Problem Formulation}

Throughout this paper, the following system dynamics shall be considered:
\begin{align}\label{ss1}
x_{k + 1} = Ax_k + {B^L}u_k^L + B^Ru_k^R + w_k,
\end{align}
where $x_k \in \mathbb{R}^n$ is the state, $u_k^L\in \mathbb{R}^{m_1}$ and $u_k^R\in \mathbb{R}^{m_2}$ are the local player and the remote player, respectively. $A$, $B^L$, $B^R$ are the constant system matrices with appropriate dimensions. $w_k$ is system noise with zero mean
and covariance $\Sigma_{w}$, taking values in $\mathbb{R}^n$. The initial value of state is $x_0$ with mean $\mu$ and covariance $\Sigma_{x_0}$, taking values in $\mathbb{R}^n$. $x_0$ and $w_k$, independent of each other, are Gaussian random variables.

As illustrated in Figure \ref{Figure1}, the uplink channel from the local player to the remote player is unreliable, while the downlink channel from the remote player to the local player is perfect. Thus, the information sets accessed by the local player and the remote player are given as follows, respectively:
\begin{align}\label{is1}
  \mathcal{F}_k^R&=\sigma \left\{\gamma_0x_0, \cdots, \gamma_kx_k \right\},\notag\\
  \mathcal{F}_k^L&=\sigma\left\{x_0, w_0, \cdots, w_{k-1},\gamma_0x_0, \cdots, \gamma_kx_k \right\},
\end{align}
in which $\gamma _k$ is an independent identically distributed Bernoulli random variable describing the state information transmitted through unreliable communication channel, i.e.,
\begin{equation}\label{uk3}
\gamma_k=\left\{ \begin{array}{ll}
  0, ~~\text{with probability}~~1-p,\\
 1, ~~\text{with probability}~~p.
\end{array} \right.
\end{equation}
In the above, $\gamma _k=1$ denotes that the state can be successfully accessed by the remote player, while $\gamma _k=0$
means the dropout of the state information from the local player to the remote player.

For the purpose of simplicity, the following notations are given:

\begin{align}\label{cs1}
 \mathcal{U}^L_N =\{u_0^L,\cdots,&u_N^L|u_k^L \in \mathbb{R}^{m_1}, u_k^L~is~\mathcal{F}_{k}^L-adapted,\notag\\
 &and~\sum\limits_{k = 0}^N \mathbb{E}[(u_k^L)^Tu_k^L]<+\infty \},\notag\\
 \mathcal{U}^R_N =\{u_0^R,\cdots,&u_N^R|u_k^R \in \mathbb{R}^{m_2}, u_k^R~is~\mathcal{F}_{k}^R-adapted,\notag\\
 &and~\sum\limits_{k = 0}^N \mathbb{E}[(u_k^R)^Tu_k^R]<+\infty\}.
\end{align}

The quadratic cost functions associated with system \eqref{ss1} are given by
\begin{small}
\begin{align}
J_N^L({u_k^L},{u_k^R})&= \sum\limits_{k = 0}^N \mathbb{E}[x_k^T{Q^L}{x_k}+(u_k^L)^T{S^L}u_k^L+(u_k^R)^T{M^L}{u_k^R}]\notag\\
&+\mathbb{E}[ x_{N + 1}^T{P^L_{N + 1}}{x_{N + 1}}]\label{cf1},\\
J_N^R({u_k^L},{u_k^R})&= \sum\limits_{k = 0}^N \mathbb{E}[x_k^T{Q^R}{x_k}+(u_k^L)^T{S^R}{u_k^L}+(u_k^R)^T{M^R}{u_k^R}]\notag\\
&+\mathbb{E}[ x_{N + 1}^T{P^R_{N + 1}}{x_{N + 1}}]\label{cf2},
\end{align}
\end{small}
where $Q^L$, $Q^R$, $S^L$, $M^L$, $S^R$, $M^R$, $P_{N+1}^L$, $P_{N+1}^R$ are given symmetric weighting matrices with compatible dimensions.

\begin{remark}\label{rm1}
It is stressed that the information sets $\mathcal{F}_{k}^L$ and $\mathcal{F}_k^R$ available to $u_k^L$ and $u_k^R$ are different, which is different from the consistent information structure case studied in previous works on LQ stochastic games \cite{slz2011,sjz2012a, sjz2012b,sy2019}. In fact, for $k=0,\cdots, N$, we have $\mathcal{F}_k^R \subseteq \mathcal{F}_{k}^L$, and the inconsistent information structure property would bring essential difficulties in solving the LQ stochastic two-person nonzero-sum game.
\end{remark}

Then, the open-loop and closed-loop LRSNG problems are stated as follows:

\textbf{Problem LRSNG.} For system \eqref{ss1} and cost functions \eqref{cf1}-\eqref{cf2}, find $u_k^L\in\mathcal{U}_N^L$ and $u_k^R\in\mathcal{U}_N^R$ to minimize $J_N^L$ and $J_N^R$, respectively.

\section{Open-loop Nash equilibrium}

In this section, we shall discuss the open-loop Nash equilibrium for Problem LRSNG in terms of FBSDEs, and the methods used are the convex variational principle and the maximum principle.

In the first place, the definition of the open-loop Nash equilibrium will be introduced.

\begin{definition}\label{def1}
 A pair $(u_k^{L,*},u_k^{R,*})$ $\in$ $\mathcal{U}_N^L \times \mathcal{U}_N^R$ is called an open-loop Nash equilibrium of Problem LRSNG if
\begin{align}
J_N^L\left(u_k^{L,*},u_k^{R,*} \right) \le J_N^L\left(u_k^{L},u_k^{R,*} \right),\forall u_k^L\in \mathcal{U}_N^L,\notag\\
J_N^R\left( u_k^{L,*},u_k^{R,*} \right) \le J_N^R\left(u_k^{L,*},u_k^{R} \right), \forall u_k^R\in \mathcal{U}_N^R.\label{pi1}
\end{align}
\end{definition}

Before stating the main results of this section, the following two lemmas will be given, which serve as preliminaries.
\begin{lemma}\label{lem1}
If we denote $(u_{k}^{L,*},u_{k}^{R,*})$ as the open-loop Nash equilibrium, then set $u_{k}^{L,\varepsilon}=u_{k}^{L,*}+\varepsilon \delta u_k^{L}$, $\delta u_k^{L}\in \mathcal{U}_N^L$, $\varepsilon\in\mathbb{R}$, ${z_{k}} = \frac{{x_{k}^\varepsilon  - {x_{k}}}}{\varepsilon }$,
and denote ${x_{k}^\varepsilon}$ and ${J_N^L}(u_k^{L,\varepsilon},u_k^{R,*})$ as the corresponding state and cost function associated with $u_{k}^{L,\varepsilon}$, $k =0, \cdots N$, then there holds
\begin{align}\label{lbf1}
&J_N^{L}(u_k^{L,\varepsilon},u_{k}^{R,*}) - {J_N^{L}(u_k^{L,*},u_{k}^{R,*})}\\
=&{\varepsilon ^2}\delta J_N^{L}(\delta u_k^{L})+ 2\varepsilon \sum\limits_{k = 0}^N \mathbb{E}[[(B^L)^T\theta _k^L + {S^L}u_k^{L,*}]^T\delta u_k^{L}],\notag
\end{align}
where $\delta J_N^{L}(\delta u_k^{L})$ is given by
\begin{align}\label{lbf2}
&\delta J_N^{L}(\delta u_k^{L})=\sum\limits_{k = 0}^N \mathbb{E}[z_k^T{Q^L}{z_k} + (\delta
u_k^{L})^T{S^L}\delta {u_k^{L}}]\notag\\
&+ \mathbb{E}[z_{N + 1}^TP_{N + 1}^L{z_{N + 1}}].
\end{align}
In the above, the costate ${\theta}_k^L (k = 0, \cdots, N)$ satisfies the following backward stochastic difference equation
\begin{equation}\label{lbf4}
\theta _{k-1}^{L}=Q^Lx_k+ \mathbb{E}\left[ A^T\theta _{k}^{L}|\mathcal{F}_k^L \right],\theta_N^L=P_{N+1}^{L}x_{N+1}.
\end{equation}
\end{lemma}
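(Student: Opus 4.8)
The plan is to carry out a standard convex (first-order) perturbation analysis of the cost $J_N^L$ along the admissible variation $u_k^{L,\varepsilon}=u_k^{L,*}+\varepsilon\,\delta u_k^L$, and then to absorb the ``cross term'' into the costate using a discrete summation-by-parts (Abel summation) argument. First I would observe that, because the dynamics \eqref{ss1} are affine in the control and the noise enters additively, the perturbed state satisfies $x_k^\varepsilon = x_k + \varepsilon z_k$ \emph{exactly}, where $z_k$ solves the deterministic-forcing linear recursion $z_{k+1}=Az_k+B^L\delta u_k^L$ with $z_0=0$ (the noise and initial condition cancel). Hence $z_k$ does not depend on $\varepsilon$, which is what makes the expansion terminate at second order.

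Next I would substitute $x_k^\varepsilon=x_k+\varepsilon z_k$ and $u_k^{L,\varepsilon}=u_k^{L,*}+\varepsilon\delta u_k^L$ into \eqref{cf1}. Expanding each quadratic form $x_k^T Q^L x_k$, $(u_k^L)^T S^L u_k^L$ and the terminal term, and using symmetry of $Q^L$, $S^L$, $P_{N+1}^L$, the difference $J_N^L(u^{L,\varepsilon},u^{R,*})-J_N^L(u^{L,*},u^{R,*})$ splits into an $O(\varepsilon^2)$ part, which is exactly $\varepsilon^2\,\delta J_N^L(\delta u_k^L)$ as defined in \eqref{lbf2}, plus an $O(\varepsilon)$ part equal to
\begin{align}
2\varepsilon\sum_{k=0}^N \mathbb{E}\big[ x_k^T Q^L z_k + (u_k^{L,*})^T S^L \delta u_k^L\big] + 2\varepsilon\,\mathbb{E}\big[x_{N+1}^T P_{N+1}^L z_{N+1}\big].
\end{align}
(The term $(u_k^R)^T M^L u_k^R$ is unaffected since $u_k^R$ is held at $u_k^{R,*}$.) It remains to show this linear part equals $2\varepsilon\sum_{k=0}^N\mathbb{E}\big[[(B^L)^T\theta_k^L+S^L u_k^{L,*}]^T\delta u_k^L\big]$.

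The heart of the argument is rewriting $\sum_{k=0}^N \mathbb{E}[x_k^T Q^L z_k] + \mathbb{E}[x_{N+1}^T P_{N+1}^L z_{N+1}]$ in terms of $\delta u_k^L$ via the costate \eqref{lbf4}. Using the definition $\theta_{k-1}^L = Q^L x_k + \mathbb{E}[A^T\theta_k^L|\mathcal F_k^L]$, I would form $\sum_{k=0}^N \mathbb{E}[z_{k+1}^T \theta_k^L] - \sum_{k=0}^N\mathbb{E}[z_k^T\theta_{k-1}^L]$ and telescope; substituting $z_{k+1}=Az_k+B^L\delta u_k^L$ on one side and the costate recursion on the other, the $z_k^T A^T\theta_k^L$ terms cancel. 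Here the crucial measurability point is that $z_k$ is $\mathcal F_k^L$-adapted (it is built from $\delta u_j^L$, $j\le k-1$, and $\delta u^L$ is admissible), so $\mathbb{E}[z_k^T A^T\theta_k^L] = \mathbb{E}[z_k^T\,\mathbb{E}[A^T\theta_k^L|\mathcal F_k^L]]$ by the tower property — this is exactly where the $\mathcal F_k^L$-conditioning in \eqref{lbf4} is needed. After telescoping, using $z_0=0$ and the terminal condition $\theta_N^L=P_{N+1}^L x_{N+1}$, one is left with $\sum_{k=0}^N\mathbb{E}[(B^L\delta u_k^L)^T\theta_k^L] = \sum_{k=0}^N \mathbb{E}[(\delta u_k^L)^T(B^L)^T\theta_k^L]$ matching the remaining $Q^L$/terminal contribution. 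Combining with the $(u_k^{L,*})^T S^L\delta u_k^L$ term gives \eqref{lbf1}.

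I expect the main obstacle to be purely bookkeeping: getting the summation-by-parts index shifts right (the costate is indexed so that $\theta_{k-1}^L$ appears with $x_k$), and being careful that each application of the tower property is legitimate, i.e., that the factor being pulled out of the conditional expectation is genuinely $\mathcal F_k^L$-measurable. No estimate or limiting argument is required since everything is a finite sum and the expansion is exact.
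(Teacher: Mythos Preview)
Your proposal is correct and follows essentially the same route as the paper: derive the exact variational recursion $z_{k+1}=Az_k+B^L\delta u_k^L$, expand the quadratic cost into $O(\varepsilon)$ and $O(\varepsilon^2)$ parts, and then convert the cross term via the costate equation \eqref{lbf4} and a telescoping (summation-by-parts) argument together with the tower property. Your explicit remark that $z_k$ is $\mathcal F_k^L$-measurable, which justifies pulling it through the conditional expectation, is in fact more careful than the paper's own presentation.
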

\begin{proof}
Using the notations introduced above, it can be derived that $z_k$ satisfies
\begin{align}\label{lbf3}
z_{k+1}=Az_k+B^L\delta u_k^{L},
\end{align}
with initial condition $z_0=0$.

Consequently, the variation of the cost function can be calculated as follows.
\begin{align*}
&{J_N^L}(u_k^{L,\varepsilon},u_k^{R,*}) - {J_N^L}(u_k^{L,*},u_k^{R,*})\\
&= \sum\limits_{k = 0}^N \mathbb{E}[{({x_k} + \varepsilon {z_k})}^T{Q^L}({x_k} + \varepsilon {z_k})\\
&+ {({u_k^{L,*}} + \varepsilon \delta {u_k^{L}})^T}{S^L}(u_k^{L,*} + \varepsilon \delta u_k^{L})\\
&+ (u_k^{R,*})^{T}{M^L}{u_k^{R,*}}]+ \mathbb{E}[({x_{N + 1}} + \varepsilon{z_{N + 1}})^T{P^L_{N + 1}}\\
&\times({x_{N + 1}}+ \varepsilon{z_{N + 1}})]- \mathbb{E}[x_{N + 1}^T{P^L_{N + 1}}{x_{N + 1}}]\\
&- \sum\limits_{k = 0}^N \mathbb{E}[x_k^T{Q^L}{x_k} + (u_k^{L,*})^TS^Lu_k^{L,*}+(u_k^{R,*})^{T}{M^L}{u_k^{R,*}}]\\
&= 2\varepsilon \mathbb{E}[\sum\limits_{k = 0}^N[x_k^TQ^Lz_k + (u_k^{L,*})^TS^L\delta u_k^{L}]\\
&+x_{N + 1}^TP^L_{N +1}z_{N + 1}] \\
&+ {\varepsilon ^2}\mathbb{E}[\sum\limits_{k = 0}^N[z_k^T{Q^L}{z_k} + (\delta u_k^{L})^T{S^L}\delta
u_k^{L}]\\
&+z_{N + 1}^TP_{N + 1}^L{z_{N + 1}}].
\end{align*}
Furthermore, from \eqref{lbf4} and \eqref{lbf3}, we have
\begin{align*}
&\mathbb{E}[\sum\limits_{k = 0}^N[x_k^TQ^Lz_k + (u_k^{L,*})^TS^L\delta u_k^{L}]\\
&+x_{N + 1}^TP^L_{N +1}z_{N + 1}] \\
&=\mathbb{E}[\sum\limits_{k = 0}^N[\theta _{k - 1}^L- E[A^T\theta
_k^L|\mathcal{F}_k^L]]^Tz_k \\
&+\sum\limits_{k = 0}^N(u_k^{L,*})^TS^L\delta u_k^{L}+(\theta _N^L)^Tz_{N + 1}] \\
&=\mathbb{E}[\sum\limits_{k = 0}^N[(B^L)^T\theta _k^L+S^Lu_k^{L,*}]^T\delta u_k^{L}].
\end{align*}
The proof is complete.
\end{proof}

Similar to Lemma \ref{lem1} and its proof, the following lemma can be given without proof.
\begin{lemma}\label{lem2}
For the open-loop Nash equilibrium $(u_{k}^{L,*},u_{k}^{R,*})$,
choose $\eta \in \mathbb{R}$, and for $k = 0, \cdots, N$, let $u_k^{R,\eta}= u_k^{R,*}+\eta\Delta u_k^{R}$, where $\Delta u_k^{R}\in \mathcal{U}_N^R$, $y_{k} = \frac{{x_{k}^\eta -{x_{k}}}}{\eta}$. Let ${x_{k}^\eta}$, $J_N^R(u_k^{L,*},u_k^{R,\eta})$ be the state and cost function associated with
$u_k^{R,\eta}$, $k =0, \cdots, N$, respectively. Then, we have
\begin{align}\label{rbf1}
&J_N^{R}(u_k^{L,*},u_k^{R,\eta}) - J_N^{R}(u_k^{L,*},u_k^{R,*})\notag\\
&={\eta ^2}\Delta J_N^{R}(\Delta u_k^{R})+2\eta \sum\limits_{k =0}^N \mathbb{E}[[{(B^R)^T}\theta _k^R + {M^R}u_k^{R,*}]^T\Delta u_k^{R}],
\end{align}
where $\Delta J_N^{R}(\Delta u_k^{R})$ is given by
\begin{align}\label{rbf2}
\Delta J_N^{R}(\Delta u_k^{R})=&\sum\limits_{k = 0}^N \mathbb{E}[y_k^T{Q^R}{y_k} + {{(\Delta u_k^{R})}^T}M^R\Delta u_k^{R}]\notag\\
&+ \mathbb{E}[y_{N + 1}^TP_{N + 1}^R{y_{N + 1}}].
\end{align}
And the costate $\theta_k^R(k =0, \cdots, N)$ satisfies
\begin{align}\label{rbf4}
\theta _{k - 1}^R = {Q^R}{x_k} + \mathbb{E}[{A^T}{\theta _k^R}|{\mathcal F_k^L}],
\end{align}
with terminal condition $\theta_{N}^R=P_{N+1}^Rx_{N+1}$.
\end{lemma}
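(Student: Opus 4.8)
The plan is to transcribe the proof of Lemma~\ref{lem1}, applying the substitutions $L\to R$, $\delta u_k^L\to\Delta u_k^R$, $\varepsilon\to\eta$, $z_k\to y_k$, $B^L\to B^R$, $S^L\to M^R$, and $\theta_k^L\to\theta_k^R$, while keeping the local control frozen at $u_k^{L,*}$. First I would subtract the state recursion \eqref{ss1} driven by $(u_k^{L,*},u_k^{R,*})$ from the one driven by $(u_k^{L,*},u_k^{R,\eta})$ and divide by $\eta$, obtaining that the variation process satisfies $y_{k+1}=Ay_k+B^R\Delta u_k^R$ with $y_0=0$. Since this recursion carries no noise, $y_k$ is a deterministic function of $\Delta u_0^R,\dots,\Delta u_{k-1}^R$; each $\Delta u_j^R$ is $\mathcal F_j^R$-measurable and hence $\mathcal F_k^L$-measurable because $\mathcal F_j^R\subseteq\mathcal F_k^L$ for $j\le k-1$ (Remark~\ref{rm1}), so $y_k$ is $\mathcal F_k^L$-adapted. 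This is precisely the adaptedness needed to handle the conditioning on $\mathcal F_k^L$ appearing in the costate recursion \eqref{rbf4}.

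Next I would insert $x_k^\eta=x_k+\eta y_k$ and $u_k^{R,\eta}=u_k^{R,*}+\eta\Delta u_k^R$ into $J_N^R$ and expand. Because $u_k^L\equiv u_k^{L,*}$ is unchanged, the terms $(u_k^L)^TS^Ru_k^L$ cancel identically, leaving exactly the $\eta^2$-term $\eta^2\Delta J_N^R(\Delta u_k^R)$ of \eqref{rbf2} together with the cross term
\[
2\eta\,\mathbb{E}\!\left[\sum_{k=0}^N\big(x_k^TQ^Ry_k+(u_k^{R,*})^TM^R\Delta u_k^R\big)+x_{N+1}^TP_{N+1}^Ry_{N+1}\right].
\]
To match this with \eqref{rbf1}, I would use \eqref{rbf4} to substitute $Q^Rx_k=\theta_{k-1}^R-\mathbb{E}[A^T\theta_k^R\,|\,\mathcal F_k^L]$, invoke the tower property (legitimate since $y_k$ is $\mathcal F_k^L$-measurable) to remove the inner conditional expectation, and then apply discrete summation by parts: $y_0=0$ kills the lower boundary term, the terminal condition $\theta_N^R=P_{N+1}^Rx_{N+1}$ absorbs $x_{N+1}^TP_{N+1}^Ry_{N+1}$, and substituting $y_{k+1}=Ay_k+B^R\Delta u_k^R$ telescopes the remainder into $\sum_{k=0}^N[(B^R)^T\theta_k^R]^T\Delta u_k^R$. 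Combining with the $M^Ru_k^{R,*}$ piece yields the claimed identity \eqref{rbf1}.

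The computation is otherwise routine; the only step requiring genuine attention — and the reason the result is not completely immediate from Lemma~\ref{lem1} — is verifying that $y_k$ is adapted to the \emph{larger} filtration $\mathcal F_k^L$ rather than merely to $\mathcal F_k^R$, so that conditioning the costate equation on $\mathcal F_k^L$ does not obstruct the tower-property and summation-by-parts arguments. This is exactly where the inclusion $\mathcal F_k^R\subseteq\mathcal F_k^L$ of Remark~\ref{rm1} enters.
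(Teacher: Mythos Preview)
Your proposal is correct and mirrors exactly what the paper intends: it explicitly states that Lemma~\ref{lem2} follows by the same argument as Lemma~\ref{lem1} and omits the proof, so your transcription with the indicated substitutions is precisely the expected route. Your extra care in verifying that $y_k$ is $\mathcal F_k^L$-adapted (via $\mathcal F_j^R\subseteq\mathcal F_k^L$) so that the tower property applies to the conditional expectation in \eqref{rbf4} is a welcome clarification that the paper leaves implicit.
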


Using the results derived in Lemmas \ref{lem1}-\ref{lem2}, the main results of this section will be presented, and the necessary and sufficient conditions for the open-loop Nash equilibrium of Problem LRSNG will be derived.

\begin{theorem}\label{th-01}
  For system \eqref{ss1} and cost functions \eqref{cf1} and \eqref{cf2}, the open-loop Nash equilibrium $(u_k^{L,*}, u_k^{R,*})$ for Problem LRSNG is unique if and only if the following two conditions are satisfied

  1) The convexity condition holds:
  \begin{align}\label{cc}
    \inf \delta J_N^L(\delta u_k^{L})\geq 0,~~\text{and}~~
    \inf \Delta J_N^R(\Delta u_k^{R})\geq 0,
  \end{align}
   in which $\delta J_N^L(\delta u_k^{L})$ and $\Delta J_N^R(\Delta u_k^{R})$ are given by \eqref{lbf2} and \eqref{rbf2}, respectively.

  2) The stationary conditions can be uniquely solved:
  \begin{align}
  0&=S^Lu_k^{L,*}+\mathbb{E}[(B^L)^T\theta _k^L|\mathcal{F}_k^L],\label{ec1}\\
  0&=M^Ru_k^{R,*}+ \mathbb{E}[(B^R)^T\theta _k^R|\mathcal{F}_k^R],\label{ec2}
  \end{align}
  where $\theta_k^L$, $\theta _k^R$ satisfy \eqref{lbf4} and \eqref{rbf4}, respectively.
\end{theorem}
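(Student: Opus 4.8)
The plan is to derive everything from the second-order cost expansions in Lemmas~\ref{lem1} and~\ref{lem2}, each of which writes a perturbed cost as the original cost plus a term quadratic in the perturbation and a term linear in it. \textbf{Necessity.} Suppose $(u_k^{L,*},u_k^{R,*})$ is an open-loop Nash equilibrium. For any fixed $\delta u_k^{L}\in\mathcal{U}_N^L$, \eqref{lbf1} together with \eqref{pi1} gives, for every $\varepsilon\in\mathbb{R}$,
\[
0\le \varepsilon^2\,\delta J_N^{L}(\delta u_k^{L})+2\varepsilon\sum_{k=0}^N\mathbb{E}\big[[(B^L)^T\theta_k^L+S^Lu_k^{L,*}]^T\delta u_k^{L}\big].
\]
A nonnegative quadratic polynomial in $\varepsilon$ must have vanishing linear coefficient and nonnegative quadratic coefficient, so the linear sum is zero and $\delta J_N^{L}(\delta u_k^{L})\ge 0$; since $\delta u_k^L$ was arbitrary this already yields $\inf\delta J_N^L\ge 0$. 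To convert the vanishing linear sum into the pointwise stationary condition \eqref{ec1}, I would run the usual localization: take $\delta u_k^{L}$ supported at a single instant $j$ and there equal to an arbitrary bounded $\mathcal{F}_j^L$-measurable $\mathbb{R}^{m_1}$-valued variable, and use the tower property together with the fact that $u_j^{L,*}$ is $\mathcal{F}_j^L$-adapted; this forces $S^Lu_j^{L,*}+\mathbb{E}[(B^L)^T\theta_j^L\mid\mathcal{F}_j^L]=0$. The same two steps applied to Lemma~\ref{lem2}, with perturbations $\Delta u_k^{R}\in\mathcal{U}_N^R$ which are $\mathcal{F}_k^R$-adapted, give $\inf\Delta J_N^R\ge 0$ and \eqref{ec2}. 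Finally, since every Nash equilibrium must satisfy \eqref{ec1}--\eqref{ec2} jointly with the forward equation \eqref{ss1} and the backward recursions \eqref{lbf4}, \eqref{rbf4}, uniqueness of the equilibrium forces this coupled FBSDE system to have a unique solution, because any second solution would also be an equilibrium by the sufficiency argument below (using the convexity condition we have just established).

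\textbf{Sufficiency.} Conversely, assume \eqref{cc} holds and that \eqref{ec1}--\eqref{ec2}, coupled with \eqref{ss1}, \eqref{lbf4} and \eqref{rbf4}, are uniquely solvable; let $(u_k^{L,*},u_k^{R,*})$ denote that solution. Substituting \eqref{ec1} into \eqref{lbf1} annihilates the linear term, so for every $\delta u_k^L\in\mathcal{U}_N^L$, taking $\varepsilon=1$,
\[
J_N^{L}(u_k^{L,*}+\delta u_k^{L},u_k^{R,*})-J_N^{L}(u_k^{L,*},u_k^{R,*})=\delta J_N^{L}(\delta u_k^{L})\ge\inf\delta J_N^L\ge 0,
\]
i.e. $u_k^{L,*}$ minimizes $J_N^L(\cdot\,,u_k^{R,*})$ over $\mathcal{U}_N^L$; symmetrically, \eqref{ec2} and \eqref{rbf1} give that $u_k^{R,*}$ minimizes $J_N^R(u_k^{L,*},\cdot)$ over $\mathcal{U}_N^R$. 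Hence $(u_k^{L,*},u_k^{R,*})$ is an open-loop Nash equilibrium, and it is unique since, by the necessity part, any equilibrium solves the stationary system, which is uniquely solvable by hypothesis.

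\textbf{Main obstacle.} The quadratic-in-$\varepsilon$ reasoning and the completing-square step are routine; the delicate point is the bookkeeping of the two \emph{different} filtrations. Passing from the integrated stationarity identities to the conditional identities \eqref{ec1}--\eqref{ec2} requires the test perturbations to range over all $\mathcal{F}_k^L$- (respectively $\mathcal{F}_k^R$-) adapted processes and a careful use of $\mathcal{F}_k^R\subseteq\mathcal{F}_k^L$ and the tower property. In particular one must keep track of the fact that the remote costate $\theta_k^R$ in \eqref{rbf4} is propagated with a conditional expectation on the \emph{larger} set $\mathcal{F}_k^L$, whereas the remote stationarity \eqref{ec2} is conditioned on the \emph{smaller} set $\mathcal{F}_k^R$; this mismatch is exactly the signature of the inconsistent information structure and is what will make the later closed-loop characterization nontrivial.
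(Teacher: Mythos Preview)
Your proposal is correct and follows essentially the same route as the paper: both arguments rest entirely on the second-order expansions of Lemmas~\ref{lem1}--\ref{lem2} and read off the convexity and stationarity conditions from the sign of the resulting quadratic in $\varepsilon$ (respectively~$\eta$). The only cosmetic difference is in how the pointwise stationary identities \eqref{ec1}--\eqref{ec2} are extracted: you localize the perturbation at a single time and invoke the tower property, whereas the paper takes the specific test direction $\delta u_k^{L}=\Theta_k^L:=S^Lu_k^{L,*}+\mathbb{E}[(B^L)^T\theta_k^L\mid\mathcal{F}_k^L]$ and argues by contradiction; both are standard and equivalent, and your handling of uniqueness (closing the loop between necessity and sufficiency) is in fact slightly more explicit than the paper's.
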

\begin{proof}
`Necessity': Suppose the open-loop Nash equilibrium $(u_k^{L,*},u_k^{R,*})$ for Problem LRSNG is unique, we will show the two conditions 1)-2) are satisfied. In fact, for the open-loop Nash equilibrium $(u_k^{L,*},u_k^{R,*})$, from Lemmas \ref{lem1}-\ref{lem2}, we know that for arbitrary $\delta u_k^{L}\in \mathcal{U}_N^L$, $\varepsilon\in \mathbb{R}$, and arbitrary $\Delta u_k^{R}\in \mathcal{U}_N^R$, $\eta \in \mathbb{R}$,
there holds
\begin{align}\label{diff1}
&{J_N^L}(u_k^{L,\varepsilon},u_k^{R,*}) - {J_N^L}(u_k^{L,*},u_k^{R,*})\notag\\
= &2\varepsilon \sum\limits_{k = 0}^N \mathbb{E}[[(B^L)^T\theta _k^L + S^Lu_k^{L,*}]^T\delta
u_k^{L}]+{\varepsilon ^2}\delta J_N^L(\delta u_k^{L}) \notag\\
\geq& 0,
\end{align}
and
\begin{align}\label{diff2}
&J_N^R(u_k^{L,*},u_k^{R,\eta}) - J_N^R(u_k^{L,*},u_k^{R,*})\notag\\
=&2\eta \sum\limits_{k = 0}^N \mathbb{E}[[(B^R)^T\theta _k^R + {M^R}u_k^{R,*}]^T\Delta u_k^{R}]+{\eta ^2}\Delta J_N^R(\Delta u_k^{R}) \notag\\
\geq& 0.
\end{align}

On the one hand, suppose the convexity condition \eqref{cc} is not true, then there exists some $\delta u_k^{L}$, $k=0, \cdots, N$ such that
${J_N^L}(u_k^{L,\varepsilon} ,u_k^{R,*}) - {J_N^L}(u_k^{L,*},u_k^{R,*})=-\infty$ with
$\varepsilon\rightarrow\infty$. For the same reason, there exists some $\Delta u_k^{R}$, $k=0, \cdots, N$ satisfying
$J_N^R(u_k^{L,*},u_k^{R,\eta} )- J_N^R(u_k^{L,*},u_k^{R,*})=-\infty$ with $\eta\rightarrow\infty$.
This contradicts 1).

On the other hand, if 2) is not satisfied, then we can assume that
  \begin{align}
    S^Lu_k^{L,*}+\mathbb{E}[(B^L)^T\theta _k^L|\mathcal{F}_k^L]&=\Theta_k^L \neq 0,\label{u11}\\
    M^Ru_k^{R,*}+\mathbb{E}[(B^R)^T\theta _k^R|\mathcal{F}_k^R]&=\Theta_k^R \neq 0.\label{u12}
  \end{align}
In this case, if we choose $\delta u_k^{L}=\Theta_k^L$ and $\Delta u_k^{R}=\Theta_k^R$, then
from \eqref{diff1} and \eqref{diff2} we have
  \begin{align*}
  &{J_N^L}(u_k^{L,\varepsilon},u_k^{R,*}) - {J_N^L}(u_k^{L,*},u_k^{R,*})\\
  &= 2\varepsilon\sum_{k=0}^{N} (\Theta_k^L)^T\Theta_k^L +\varepsilon^2\delta J_N^L(\delta
  u_k^{L}),\\
  &J_N^R(u_k^{L,*},u_k^{R,\eta} ) - J_N^R(u_k^{L,*},u_k^{R,*})\\
  &=2\eta\sum_{k=0}^{N} (\Theta_k^R)^T\Theta_k^R +\eta^2\Delta J_N^R(\Delta u_k^{R}).
  \end{align*}
Then, we can always find some $\varepsilon$ and $\eta$ $<0$ such that ${J_N^L}(u_k^{L,\varepsilon},u_k^{R,*}) -
{J_N^L}(u_k^{L,*},u_k^{R,*})$and $J_N^R(u_k^{L,*},u_k^{R,\eta} ) - J_N^R(u_k^{L,*},u_k^{R,*})$ $<0$,
which contradicts with \eqref{diff1}, \eqref{diff2}. Thus, $\Theta_k^L$, $\Theta_k^R=0$, i.e.,
\eqref{ec1}, \eqref{ec2} holds. This ends the necessity proof.

`Sufficiency': Suppose the two conditions 1)-2) hold, we need to prove that the open-loop Nash equilibrium $(u_k^{L,*}, u_k^{R,*})$ is unique.

Actually, it can be deduced from \eqref{lbf1} and \eqref{rbf1} that for any
$\varepsilon\in\mathbb{R}$, $\eta\in\mathbb{R}$ and $\delta u_k^{L}\in \mathcal{U}_N^L$, $\Delta u_k^{R}\in \mathcal{U}_N^R$, we have
\begin{align*}
{J_N^L}(u_k^{L,\varepsilon},u_k^{R,*}) - {J_N^L}(u_k^{L,*},u_k^{R,*})&=\varepsilon^2\delta
J_N^L(\delta u_k^{L})\geq 0,\\
J_N^R(u_k^{L,*},u_k^{R,\eta} ) - J_N^R(u_k^{L,*},u_k^{R,*})&={\eta ^2}\Delta J_N^R(\Delta
u_k^{R})\geq 0,
\end{align*}
which means that open-loop Nash equilibria of Problem LRSNG is uniquely solvable. The proof is complete.
\end{proof}

\begin{remark}\label{rm2}
By using the variational method, the maximum principle for Problem LRSNG is derived. Furthermore, Theorem \ref{th-01} provides the necessary and sufficient solvability conditions for the open-loop Nash equilibrium of Problem LRSNG with inconsistent information structure for the first time.
\end{remark}

\begin{remark}\label{rm3}
From Theorem \ref{th-01}, the forward and backward difference equations (FBSDEs) can be given as follows
\begin{align}\label{fbsde}
\left\{ \begin{array}{ll}
&x_{k + 1}= Ax_k + {B^L}u_k^L + B^Ru_k^R + w_k,\\
&\theta _{k-1}^{L}=Q^Lx_k+ \mathbb{E}\left[ A^T\theta_{k}^{L}|\mathcal{F}_k^L \right],\\
&\theta _{k - 1}^R= {Q^R}{x_k} + \mathbb{E}[{A^T}{\theta _k^R}|{\mathcal F_k^L}],\\
&0=S^Lu_k^{L,*}+\mathbb{E}[(B^L)^T\theta _k^L|\mathcal{F}_k^L],\\
&0=M^Ru_k^{R,*}+ \mathbb{E}[(B^R)^T\theta _k^R|\mathcal{F}_k^R],\\
&\theta_N^L=P_{N+1}^{L}x_{N+1},\theta_N^R=P_{N+1}^{R}x_{N+1}.
\end{array} \right.
\end{align}
Due to the different information structure caused by the unreliable uplink channel, the FBSDEs \eqref{fbsde} cannot be decoupled as the traditional consistent information case, see \cite{nglb2014,nb2012}. Hence the explicit feedback Nash equilibrium cannot be derived via solving FBSDEs \eqref{fbsde}, which is challenging.
\end{remark}

\section{Closed-loop Nash equilibrium}

In this section, the closed-loop Nash equilibrium for Problem LRSNG will be studied. As illustrated in Remark \ref{rm3}, in view of the existence of inconsistent information structure, the FBSDEs  cannot be decoupled, which indicates the explicit Nash equilibrium cannot be derived via decoupling FBSDEs \eqref{fbsde} of Theorem \ref{th-01}. Alternatively, our aim is to obtain a feedback explicit Nash equilibrium (the closed-loop Nash equilibrium) by the use of the orthogonal decomposition and completing square approaches.

\subsection{Preliminaries}
To begin with, some preliminary results shall be introduced on the orthogonal decomposition method in deriving a feedback explicit Nash equilibrium (the closed-loop Nash equilibrium).

Without loss of generality, the following standard assumption will be made, see \cite{eln2013,y2013,ls1995}.
\begin{assumption}\label{ass1}
The weighting matrices in \eqref{cf1}, \eqref{cf2} satisfy
$Q^L\ge 0$, $Q^R\ge 0$, $S^L>0$, $S^R>0$, $M^L>0$, $M^R>0$, and $P_{N+1}^L\ge 0$, $P_{N+1}^R\ge 0$.
\end{assumption}

For the sake of discussion, the following notations are introduced.
\begin{align}\label{ncl}
&{\Lambda^L}=
\begin{bmatrix} S^L &  \\
 &  M^L \end{bmatrix},
{\Lambda^R}=
\begin{bmatrix} S^R &  \\
 &  M^R \end{bmatrix},
U_k=\begin{bmatrix}
\hat{u}_k^L\\
	u_k^R\\
\end{bmatrix},\notag\\
&\mathcal {B}=\begin{bmatrix} B^L&B^R \end{bmatrix},
\hat{u}_k^L=\mathbb{E}[u_k^L|\mathcal F_k^R],
\tilde u_k^L=u_k^L-\hat {u}_k^L.
\end{align}

Consequently, due to the existence of the unreliable uplink channel from the local player to the remote player, based on the disturbed state information, an estimator should be derived, which will be presented in the following lemma.
\begin{lemma}\label{lem3}
  For system \eqref{ss1} and the disturbed state \eqref{is1}, in the sense of minimizing the error covariance, using the notations introduced in \eqref{ncl}, the optimal estimator $\hat x_{k|k}=\mathbb{E}[x_k|\gamma_0x_0,\cdots, \gamma_kx_k]$ can be calculated as
\begin{align}\label{kw6}
\hat{x}_{k|k}={\gamma _k}x_k+(1 - {\gamma _k})(A\hat x_{k-1|k-1}+\mathcal BU_{k-1}),
\end{align}
with initial condition $\hat x_{0|0}={\gamma _0}x_0+(1 - {\gamma _0})\mu$, and $\mu$ is the mean value of the initial state $x_0$. Moreover, the estimation error ${{\tilde x}_k} = {x_k} - {{\hat x}_{k|k}}$ satisfies
\begin{align}\label{kw7}
\tilde{x}_k= (1 - {\gamma _k})(A{{\tilde x}_{k - 1}}
+ {B^L}{{\tilde u}_{k - 1}^L} + {w_{k - 1}}),
\end{align}
with initial condition $\tilde{x}_0=(1-\gamma _0)(x_0-\mu)$.
\end{lemma}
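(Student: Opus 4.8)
\textbf{Proof proposal for Lemma~\ref{lem3}.}
The plan is to derive the recursion for $\hat{x}_{k|k}$ by conditioning on the two possible realizations of the Bernoulli variable $\gamma_k$ and then obtain the error recursion by subtraction. First I would note that, by the definition of $\mathcal{F}_k^R$ in \eqref{is1}, the information available at time $k$ is generated by $\{\gamma_0 x_0,\dots,\gamma_k x_k\}$, so that $\mathcal{F}_{k-1}^R\subseteq\mathcal{F}_k^R$ and the control $U_{k-1}$ is $\mathcal{F}_{k-1}^R$-measurable (since $\hat{u}_{k-1}^L=\mathbb{E}[u_{k-1}^L|\mathcal{F}_{k-1}^R]$ and $u_{k-1}^R$ is $\mathcal{F}_{k-1}^R$-adapted). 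Then I would split into cases: when $\gamma_k=1$ the remote player observes $x_k$ exactly, so $\hat{x}_{k|k}=x_k$; when $\gamma_k=0$ no new state information arrives, so $\hat{x}_{k|k}=\mathbb{E}[x_k|\mathcal{F}_{k-1}^R]$, and applying the dynamics \eqref{ss1} together with $\mathbb{E}[w_{k-1}|\mathcal{F}_{k-1}^R]=0$ (noise independence) and the $\mathcal{F}_{k-1}^R$-measurability of $U_{k-1}$ gives $\mathbb{E}[x_k|\mathcal{F}_{k-1}^R]=A\hat{x}_{k-1|k-1}+\mathcal{B}U_{k-1}$. Combining the two cases with the indicator weights $\gamma_k$ and $1-\gamma_k$ yields \eqref{kw6}; the initial condition follows the same way with $\mathbb{E}[x_0]=\mu$.

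For the error recursion, I would subtract \eqref{kw6} from the state dynamics \eqref{ss1}. Writing $x_k = Ax_{k-1}+B^Lu_{k-1}^L+B^Ru_{k-1}^R+w_{k-1}$ and using $U_{k-1}=[(\hat{u}_{k-1}^L)^T,(u_{k-1}^R)^T]^T$, $\mathcal{B}=[B^L\ B^R]$, so that $\mathcal{B}U_{k-1}=B^L\hat{u}_{k-1}^L+B^Ru_{k-1}^R$, I would get $x_k-(A\hat{x}_{k-1|k-1}+\mathcal{B}U_{k-1})=A\tilde{x}_{k-1}+B^L(u_{k-1}^L-\hat{u}_{k-1}^L)+w_{k-1}=A\tilde{x}_{k-1}+B^L\tilde{u}_{k-1}^L+w_{k-1}$, using the definition $\tilde{u}_{k-1}^L=u_{k-1}^L-\hat{u}_{k-1}^L$. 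Then $\tilde{x}_k = x_k-\hat{x}_{k|k} = x_k-\gamma_k x_k-(1-\gamma_k)(A\hat{x}_{k-1|k-1}+\mathcal{B}U_{k-1}) = (1-\gamma_k)\bigl(x_k-(A\hat{x}_{k-1|k-1}+\mathcal{B}U_{k-1})\bigr)=(1-\gamma_k)(A\tilde{x}_{k-1}+B^L\tilde{u}_{k-1}^L+w_{k-1})$, which is exactly \eqref{kw7}; the initial error is $\tilde{x}_0=x_0-\hat{x}_{0|0}=(1-\gamma_0)(x_0-\mu)$.

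To justify the optimality claim (that $\hat{x}_{k|k}$ minimizes the error covariance among $\mathcal{F}_k^R$-measurable estimators), I would invoke the standard fact that the conditional expectation $\mathbb{E}[x_k|\mathcal{F}_k^R]$ is the minimum mean-square error estimator, and argue by induction that the recursion \eqref{kw6} indeed computes $\mathbb{E}[x_k|\mathcal{F}_k^R]$: on $\{\gamma_k=1\}$ this is immediate, and on $\{\gamma_k=0\}$ it reduces to the one-step prediction $\mathbb{E}[x_k|\mathcal{F}_{k-1}^R]$ which by the tower property and the dynamics equals $A\,\mathbb{E}[x_{k-1}|\mathcal{F}_{k-1}^R]+\mathcal{B}U_{k-1}=A\hat{x}_{k-1|k-1}+\mathcal{B}U_{k-1}$ by the inductive hypothesis. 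The one subtlety to be careful about is the measurability of $U_{k-1}$ with respect to $\mathcal{F}_{k-1}^R$: $u_{k-1}^R$ is $\mathcal{F}_{k-1}^R$-adapted by definition of $\mathcal{U}_N^R$, while $\hat{u}_{k-1}^L$ is $\mathcal{F}_{k-1}^R$-measurable by construction as a conditional expectation, so $\mathcal{B}U_{k-1}$ may be pulled out of $\mathbb{E}[\cdot|\mathcal{F}_{k-1}^R]$; I expect verifying this measurability bookkeeping (together with the independence of $\gamma_k$ from the past used implicitly when splitting on its value) to be the only real point requiring care, the rest being routine substitution.
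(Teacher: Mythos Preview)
Your proposal is correct and follows the standard argument for this type of packet-drop estimator. Note that the paper itself does not give a proof of Lemma~\ref{lem3}: it simply states that ``the detailed proof can be found in \cite{qz2017a,qz2017b}, which is omitted here.'' Your write-up therefore supplies strictly more than the paper does, and the approach you sketch---splitting on the value of $\gamma_k$, using the independence of $w_{k-1}$ and the $\mathcal{F}_{k-1}^R$-measurability of $U_{k-1}$ to compute the one-step prediction, and then subtracting to obtain the error recursion---is exactly the argument carried out in those references. The one point worth making explicit when you write it out is that the remote player is assumed to know the realization of $\gamma_k$ (i.e., whether the packet arrived), which is what allows $\gamma_k$ to appear as a coefficient in \eqref{kw6}; this is the usual convention in packet-drop models and is implicit in the paper's setup.
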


\begin{proof}
The detailed proof can be found in \cite{qz2017a,qz2017b}, which is omitted here.
\end{proof}

In order to be consistent with the information structure introduced in \eqref{cs1}, the form of the feedback explicit Nash equilibrium of Problem LRSNG is assumed as follows, see  \cite{sjz2012a,sjz2012b,ott2016,aon2019}. Specifically, based on \eqref{ncl} and Lemma \ref{lem3}, we assume:
\begin{assumption}\label{ass2}
On the one hand, $U_k$ is the feedback of the optimal estimator $\hat{x}_{k|k}$, i.e., $U_k=\tilde{K}_k^{L}\hat{x}_{k|k}$. On the other hand, $\tilde u_k^{L}$ is the feedback of the optimal estimation error $\tilde x_k$, i.e., $\tilde u_k^{L}=\tilde{K}_k^R\tilde x_k$.
\end{assumption}

From Assumption \ref{ass2}, we know that the feedback explicit Nash equilibrium (the closed-loop Nash equilibrium) is of the following form:
\begin{align}\label{form}
  u_k^L & =[I_{m_1} ~ 0]\tilde{K}_k^{L}\hat{x}_{k|k}+\tilde{K}_k^R\tilde x_{k},~\text{and}~
  u_k^{R} =[0 ~ I_{m_2}]\tilde{K}_k^{L}\hat{x}_{k|k}.
\end{align}

In the following, we will introduce the definition of closed-loop Nash equilibrium.
\begin{definition}\label{def2}
Under Assumption \ref{ass2}, a pair $(K_k^{L},K_k^{R})$ is called the closed-loop Nash equilibrium of Problem LRSNG if for any $(\tilde{K}_k^{L},\tilde{K}_k^{R})\in \mathbb{R}^{(m_1+m_2) \times n}\times \mathbb{R}^{m_1 \times n}$, the following relationships hold:
\begin{align}
J_N^L&([I_{m_1} ~~ 0]K_k^{L}\hat{x}_{k|k}+{K}_k^{R}\tilde x_{k},[0 ~~ I_{m_2}]K_k^{L}\hat{x}_{k|k})\label{clne1}\\
&\leq J_N^L([I_{m_1} ~~ 0]\tilde{K}_k^{L}\hat{x}_{k|k}+{K}_k^{R}\tilde x_{k},[0 ~~ I_{m_2}]\tilde{K}_k^L\hat{x}_{k|k}),\notag\\
J_N^R&([I_{m_1} ~~ 0]K_k^{L}\hat{x}_{k|k}+{K}_k^{R}\tilde x_{k},[0 ~~ I_{m_2}]K_k^{L}\hat{x}_{k|k})\label{clne2}\\
&\leq J_N^R([I_{m_1} ~~ 0]K_k^{L}\hat{x}_{k|k}+\tilde{K}_k^R\tilde x_{k},[0 ~~ I_{m_2}]K_k^{L}\hat{x}_{k|k}).\notag
\end{align}
\end{definition}

From Assumption \ref{ass2} and Definition \ref{def2}, the following lemma on the orthogonal property can be directly derived.
\begin{lemma}\label{lem4}
Under Assumption \ref{ass2}, for arbitrary $(\tilde{K}_k^{L},\tilde{K}_k^{R})\in \mathbb{R}^{(m_1+m_2) \times n}\times \mathbb{R}^{m_1 \times n}$, $U_k$ and $\tilde u_k^{L}$ given in Assumption \ref{ass2} are orthogonal, i.e., $\mathbb{E}[U_k^TH\tilde u_k^{L}]=0$ for any constant matrix $H$ with compatible dimensions.
\end{lemma}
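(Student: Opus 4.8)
The plan is to read the result off the minimum-error-variance (MMSE) property of the estimator built in Lemma~\ref{lem3}. By construction $\hat{x}_{k|k}=\mathbb{E}[x_k\mid\mathcal{F}_k^R]$, so the estimation error $\tilde{x}_k=x_k-\hat{x}_{k|k}$ is unbiased given the remote information, $\mathbb{E}[\tilde{x}_k\mid\mathcal{F}_k^R]=0$; equivalently $\tilde{x}_k$ is orthogonal to $L^2(\mathcal{F}_k^R)$. Since Assumption~\ref{ass2} writes $U_k=\tilde{K}_k^{L}\hat{x}_{k|k}$ and $\tilde{u}_k^{L}=\tilde{K}_k^{R}\tilde{x}_k$ with \emph{constant} gains, the whole statement reduces to showing that $U_k$ is $\mathcal{F}_k^R$-measurable and then invoking this orthogonality.

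First I would expand, using Assumption~\ref{ass2},
\begin{align*}
\mathbb{E}[U_k^{T}H\tilde{u}_k^{L}]=\mathbb{E}\big[\hat{x}_{k|k}^{T}(\tilde{K}_k^{L})^{T}H\tilde{K}_k^{R}\,\tilde{x}_k\big].
\end{align*}
Next I would establish, by a short backward induction on the filter recursion \eqref{kw6}, that $\hat{x}_{k|k}$, and hence $U_k=\tilde{K}_k^{L}\hat{x}_{k|k}$, is $\mathcal{F}_k^R$-measurable: the base case $\hat{x}_{0|0}=\gamma_0 x_0+(1-\gamma_0)\mu$ is $\mathcal{F}_0^R$-measurable, and if $\hat{x}_{k-1|k-1}$ (hence $U_{k-1}$) is $\mathcal{F}_{k-1}^R$-measurable, then $\gamma_k x_k+(1-\gamma_k)(A\hat{x}_{k-1|k-1}+\mathcal{B}U_{k-1})$ is $\mathcal{F}_k^R$-measurable, the remote player having access to $\gamma_k$. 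Conditioning on $\mathcal{F}_k^R$ and pulling the $\mathcal{F}_k^R$-measurable row vector $\hat{x}_{k|k}^{T}(\tilde{K}_k^{L})^{T}H\tilde{K}_k^{R}$ out of the inner conditional expectation then gives
\begin{align*}
\mathbb{E}[U_k^{T}H\tilde{u}_k^{L}]=\mathbb{E}\big[\hat{x}_{k|k}^{T}(\tilde{K}_k^{L})^{T}H\tilde{K}_k^{R}\,\mathbb{E}[\tilde{x}_k\mid\mathcal{F}_k^R]\big]=0,
\end{align*}
which is the claim. Equivalently, one can bypass the tower rule and quote the projection theorem directly: with $Z:=(\tilde{K}_k^{R})^{T}H^{T}U_k$, which lies in $L^2(\mathcal{F}_k^R)$ by the measurability just established, one has $\mathbb{E}[U_k^{T}H\tilde{u}_k^{L}]=\mathbb{E}[Z^{T}\tilde{x}_k]=0$ since $\tilde{x}_k$ is orthogonal to $L^2(\mathcal{F}_k^R)$; the transposed (scalar) identity gives the symmetric statement as well.

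Because the computation is only a few lines, the one place that really needs care — and the only genuine obstacle — is the measurability bookkeeping: I must make sure that, once the closed-loop form of Assumption~\ref{ass2} is imposed, the whole chain $\hat{x}_{0|0}\to U_0\to\hat{x}_{1|1}\to\cdots\to\hat{x}_{k|k}\to U_k$ stays adapted to $\mathcal{F}_k^R$, using the convention (implicit in the definition of $\mathcal{F}_k^R$ in \eqref{is1}) that the packet-arrival indicators $\gamma_0,\dots,\gamma_k$ are observed by the remote player. This is exactly what makes the recursion \eqref{kw6} legitimately coincide with $\mathbb{E}[x_k\mid\mathcal{F}_k^R]$ in the feedback setting, and simultaneously what makes $U_k$ orthogonal to the innovation $\tilde{x}_k$, so the two requirements are really one and the same.
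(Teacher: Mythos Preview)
Your proposal is correct and follows essentially the same route as the paper: substitute the closed-loop forms from Assumption~\ref{ass2} and invoke the orthogonality of $\hat{x}_{k|k}$ and $\tilde{x}_k$. The paper's proof simply asserts this orthogonality in one line, whereas you supply the extra measurability induction and tower-rule justification; this is a welcome bit of rigor but not a different approach.
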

\begin{proof}
In fact, from Assumption \ref{ass2}, we know that $U_k=\tilde{K}_k^L\hat{x}_{k|k}$ and $\tilde u_k^{L}=\tilde{K}_k^R\tilde x_k$, hence
\begin{align}
\mathbb{E}[U_k^TH\tilde{u}_k^L]
=&\mathbb{E}[(\tilde{K}_k^L\hat{x}_{k|k})^TH\tilde{K}^R_k\tilde{x}_k]\notag\\
=&\mathbb{E}[\hat{x}_{k|k}^T(\tilde{K}_k^L)^TH\tilde{K}^R_k\tilde{x}_k]\notag\\
=&0,
\end{align}
in which the orthogonality of $\hat{x}_{k|k}$ and $\tilde{x}_k$ has been inserted.
\end{proof}
\begin{remark}
In fact, noting that $u_k^L$ is $\mathcal{F}_{k}^L$-adapted, $u_k^R$ is $\mathcal{F}_{k}^R$-adapted, and $\mathcal{F}_k^R \subseteq \mathcal{F}_{k}^L$. Based on this basic property, hence Assumption \ref{ass2} is given, the orthogonal of $U_k$ and $\tilde u_k^{L}$ is shown in Lemma \ref{lem4}. The above method is called the orthogonal decomposition method, which is important in deriving the closed-loop Nash equilibrium.
\end{remark}

Using the results of Lemma \ref{lem4}, the cost functions \eqref{cf1}-\eqref{cf2} can be equivalently rewritten as follows:
\begin{align}
J_N^L(u_k^L, u_k^R)\notag
&= \sum\limits_{k = 0}^N \mathbb{E}[x_k^T{Q^L}{x_k} + U_k^T{\Lambda^L}{U_k} + (\tilde u_k^L)^T{S^L}{\tilde
u_k^L}]\notag\\
&+\mathbb{E}[ x_{N + 1}^T{P_{N + 1}^L}{x_{N + 1}}],\label{kw2}\\
J_N^R({u_k^L},u_k^R)\notag
&= \sum\limits_{k = 0}^N \mathbb{E}[x_k^T{Q^R}{x_k} + U_k^T{\Lambda^R}{U_k} + (\tilde u_k^L)^T{S^R}{\tilde
u_k^L}]\notag\\
&+\mathbb{E}[ x_{N + 1}^T{P_{N + 1}^R}{x_{N + 1}}]\label{kw3}.
\end{align}
Besides, for simplicity, we can always rewrite system dynamics \eqref{ss1} as:
\begin{align}\label{kw1}
x_{k+1}=Ax_k+\mathcal BU_k+B^L\tilde{u}_k^L+w_k.
\end{align}

\subsection{The closed-loop Nash equilibrium}

Before presenting the main results of closed-loop Nash equilibrium, we will introduce the following coupled Riccati equations in the first place.
{\small\begin{align}\label{rere}
\left\{ \begin{array}{ll}
P_k^L&= {A^T} P_{k+1}^LA+ Q^L-(K_k^L)^T(\Lambda^L+\mathcal {B}^TP_{k+1}^L\mathcal {B})K_k^L,\\
P_k^R&={A^T} P_{k+1}^RA + Q^R-(K_k^R)^T[{S^R}+(B^L)^TP_{k+1}^RB^L]K_k^R\\
&+p[(A + B^LK_k^R)^T \Omega_{k+1}^R(A + B^LK_k^R)\\
&- {(A + B^LK_k^R)^T} P_{k+1}^R(A + B^LK_k^R)],\\
\Omega_k^L&=p[(A + B^LK_k^R)^TP_{k+1}^L(A + B^LK_k^R)]\\
&+(1-p)[(A + B^LK_k^R)^T\Omega_{k+1}^L(A + B^LK_k^R)]\\
&+{Q^L}+ (K_k^R)^T{S^L}K_k^R,\\
\Omega_k^R&={(A + \mathcal {B}K_k^L)^T} \Omega_{k+1}^R(A + \mathcal {B}K_k^L)\\
&+ {Q^R}+ (K_k^L)^T\Lambda^{R}K_k^L,\\
K_k^{L}&=-(\Lambda^L+\mathcal {B}^TP_{k+1}^L\mathcal B)^{-1}\mathcal {B}^TP_{k+1}^LA,\\
K_k^{R}&=-[S^R+(B^L)^TP_{k+1}^RB^L]^{-1}(B^L)^TP_{k+1}^RA,\\
&S^R+(B^L)^TP_{k+1}^RB^L>0,
\end{array} \right.
\end{align}}
with terminal conditions $\Omega_{N+1}^L=P_{N+1}^L$, $\Omega_{N+1}^R=P_{N+1}^R$ and $P_{N+1}^L, P_{N+1}^R$ are given in \eqref{cf1}-\eqref{cf2}, respectively.

\begin{theorem}\label{th-02}
Under Assumptions \ref{ass1} and \ref{ass2}, if the coupled Riccati equation \eqref{rere} is solvable, then the closed-loop Nash equilibrium of Problem LRSNG is unique, and the closed-loop optimal Nash equilibrium is derived as
\begin{align}
    u_k^{L,*} & =[I_{m_1} ~~ 0]K_k^{L}\hat{x}_{k|k}+{K}_k^{R}\tilde x_{k}\label{occ1},\\
    u_k^{R,*} & =[0 ~~ I_{m_2}]K_k^{L}\hat{x}_{k|k}\label{occ2},
\end{align}
where $\hat{x}_{k/k}$ and $\tilde x_{k}$ are the optimal estimator and estimation error given in Lemma \ref{lem3}, and the gain matrices
$K_k^L,K_k^R$ can be calculated via \eqref{rere}.

With the closed-loop Nash equilibrium \eqref{occ1}-\eqref{occ2}, the corresponding optimal cost functions can be respectively calculated as follows.
\begin{align}
  &J_N^L(u_k^{L,*}, u_k^{R,*})
= \mathbb{E}[\hat x_{0|0}^TP_0^L\hat x_{0|0}+\tilde x_0^T \Omega_0^L\tilde {x}_0]\notag\\
  &~~~~~+p\sum\limits_{k = 0}^N Tr(\Sigma _wP_{k+1}^L)
  +(1 - p)\sum\limits_{k = 0}^N Tr(\Sigma _w\Omega_{k+1}^L)\label{ocf1},\\
  &J_N^R(u_k^{L,*}, u_k^{R,*})
=\mathbb{E}[{{\hat x}_{0|0}}^T\Omega_0^R{\hat x}_{0|0} + \tilde {x}_0^TP_0^R\tilde {x}_0]\notag\\
  &~~~~~+p\sum\limits_{k = 0}^NTr(\Sigma _w\Omega_{k+1}^R)
  +(1 - p)\sum\limits_{k = 0}^NTr(\Sigma _wP_{k+1}^R)\label{ocf2}.
\end{align}
\end{theorem}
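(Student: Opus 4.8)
The plan is to prove both the Nash property and the two cost formulas at once, by a backward‑induction \emph{completing‑the‑square} argument built on the orthogonal decomposition of Lemma~\ref{lem4} and the estimator/error recursions of Lemma~\ref{lem3}. The guiding picture is that, for player $L$, the matrices $P_k^L$ and $\Omega_k^L$ are the value‑function kernels acting respectively on $\hat x_{k|k}$ and on $\tilde x_k$, while for player $R$ the kernels of $\hat x_{k|k}$ and $\tilde x_k$ are $\Omega_k^R$ and $P_k^R$; each off‑equilibrium inequality of Definition~\ref{def2} will then collapse to a single nonnegative quadratic remainder whose vanishing pins down the corresponding gain.

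For \eqref{clne1}, freeze the remote strategy at the candidate (so $\tilde u_k^L=K_k^R\tilde x_k$), write $x_k=\hat x_{k|k}+\tilde x_k$ and take $U_k$ as in \eqref{ncl}, using the rewritten cost \eqref{kw2} and dynamics \eqref{kw1}. Set
\[
V_k^L=\mathbb{E}\big[\hat x_{k|k}^TP_k^L\hat x_{k|k}+\tilde x_k^T\Omega_k^L\tilde x_k\big]+p\sum_{j=k}^{N}\Tr(\Sigma_wP_{j+1}^L)+(1-p)\sum_{j=k}^{N}\Tr(\Sigma_w\Omega_{j+1}^L).
\]
By Lemma~\ref{lem3}, $\hat x_{k+1|k+1}=(A\hat x_{k|k}+\mathcal BU_k)+\gamma_{k+1}e_k$ and $\tilde x_{k+1}=(1-\gamma_{k+1})e_k$ with $e_k:=A\tilde x_k+B^L\tilde u_k^L+w_k$; using $\gamma_{k+1}^2=\gamma_{k+1}$, $\mathbb{E}\gamma_{k+1}=p$, independence of $(\gamma_{k+1},w_k)$ from $\mathcal F_k^L$, $\mathbb{E}w_k=0$, $\mathrm{Cov}(w_k)=\Sigma_w$, and the orthogonality $\mathbb{E}[\hat x_{k|k}^TM\tilde x_k]=0$ from Lemma~\ref{lem4}, every estimator--error cross term vanishes and $e_k$ is seen to load the $\hat x$‑kernel with weight $pP_{k+1}^L$ and the $\tilde x$‑kernel with weight $(1-p)\Omega_{k+1}^L$. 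Completing the square in $U_k$ against $\Lambda^L+\mathcal B^TP_{k+1}^L\mathcal B$ (minimizer $K_k^L\hat x_{k|k}$, by the defining line of $K_k^L$ in \eqref{rere}) and substituting $\tilde u_k^L=K_k^R\tilde x_k$, one verifies the one‑step identity
\[
\mathbb{E}\big[x_k^TQ^Lx_k+U_k^T\Lambda^LU_k+(\tilde u_k^L)^TS^L\tilde u_k^L\big]=V_k^L-V_{k+1}^L+\mathbb{E}\big[(U_k-K_k^L\hat x_{k|k})^T(\Lambda^L+\mathcal B^TP_{k+1}^L\mathcal B)(U_k-K_k^L\hat x_{k|k})\big],
\]
in which the $\hat x_{k|k}$‑coefficient reduces to the $P_k^L$‑recursion of \eqref{rere} and the $\tilde x_k$‑coefficient to the $\Omega_k^L$‑recursion. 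Summing over $k=0,\dots,N$ and using the terminal identity $\mathbb{E}[x_{N+1}^TP_{N+1}^Lx_{N+1}]=\mathbb{E}[\hat x_{N+1|N+1}^TP_{N+1}^L\hat x_{N+1|N+1}+\tilde x_{N+1}^T\Omega_{N+1}^L\tilde x_{N+1}]$ (valid since $\Omega_{N+1}^L=P_{N+1}^L$ and by orthogonality) gives $J_N^L(u_k^L,u_k^{R,*})=V_0^L+\sum_{k=0}^N\mathbb{E}[(U_k-K_k^L\hat x_{k|k})^T(\Lambda^L+\mathcal B^TP_{k+1}^L\mathcal B)(U_k-K_k^L\hat x_{k|k})]$. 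Since $\Lambda^L>0$ and $P_{k+1}^L\ge0$ (the latter by backward induction on \eqref{rere} under Assumption~\ref{ass1}), the remainder weight is positive definite, so the minimum is attained exactly at $U_k=K_k^L\hat x_{k|k}$, i.e. by \eqref{occ1}--\eqref{occ2}, and equals $V_0^L$, which is \eqref{ocf1}.

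Inequality \eqref{clne2} and the cost \eqref{ocf2} follow from the mirror argument: freeze $U_k=K_k^L\hat x_{k|k}$, take $\Omega_k^R,P_k^R$ as the $\hat x_{k|k},\tilde x_k$ kernels, and complete the square in $\tilde u_k^L$; now $e_k$ loads $\Omega_{k+1}^R$ with probability $p$ and $P_{k+1}^R$ with probability $1-p$. The one subtlety is that the remote gain in \eqref{rere} is $K_k^R=-[S^R+(B^L)^TP_{k+1}^RB^L]^{-1}(B^L)^TP_{k+1}^RA$, built from $P_{k+1}^R$ alone; one must check that the $P_k^R$‑line of \eqref{rere} is exactly the closed form
\[
P_k^R=Q^R+(K_k^R)^TS^RK_k^R+(A+B^LK_k^R)^T\big[(1-p)P_{k+1}^R+p\,\Omega_{k+1}^R\big](A+B^LK_k^R),
\]
the ``mixed'' bracket $p[(A+B^LK_k^R)^T\Omega_{k+1}^R(A+B^LK_k^R)-(A+B^LK_k^R)^TP_{k+1}^R(A+B^LK_k^R)]$ being precisely the correction between the propagation weight $(1-p)P_{k+1}^R+p\Omega_{k+1}^R$ and the optimization weight $P_{k+1}^R$ that defines $K_k^R$. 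Positive definiteness of $S^R+(B^L)^TP_{k+1}^RB^L$ (hypothesized in \eqref{rere}) then forces the minimizing $\tilde u_k^L$ to be unique, so $(K_k^L,K_k^R)$ is the unique closed‑loop Nash equilibrium, with minimum cost \eqref{ocf2}.

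The step I expect to be the main obstacle is exactly this one‑step bookkeeping: for each player, keeping straight which of $P,\Omega$ weights the estimator term and which weights the error term, tracking how the Bernoulli variable $\gamma_{k+1}$ splits the innovation/noise contribution (this is what generates the $p$ and $1-p$ coefficients and the $\Tr(\Sigma_w\cdot)$ terms appearing in \eqref{ocf1}--\eqref{ocf2}), and verifying that the two sides of the identity collapse exactly onto the five matrix recursions of \eqref{rere} — in particular the asymmetric $P_k^R$‑equation. A secondary, routine point is the backward‑induction proof that $P_k^L,\Omega_k^L,P_k^R,\Omega_k^R\ge0$ under Assumption~\ref{ass1}, which legitimizes $\Lambda^L+\mathcal B^TP_{k+1}^L\mathcal B>0$ and, together with the solvability hypothesis $S^R+(B^L)^TP_{k+1}^RB^L>0$, underpins the uniqueness assertion.
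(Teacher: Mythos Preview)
Your proposal is correct and follows essentially the same route as the paper: the paper packages the two completing-the-square computations as Propositions~\ref{prop2} and~\ref{prop3}, using exactly the value-function ans\"atze $V_N^L=\mathbb{E}[\hat x_{k|k}^TP_k^L\hat x_{k|k}+\tilde x_k^T\Omega_k^L\tilde x_k]$ and $V_N^R=\mathbb{E}[\hat x_{k|k}^T\Omega_k^R\hat x_{k|k}+\tilde x_k^TP_k^R\tilde x_k]$ you wrote down, and proves $\Lambda^L+\mathcal B^TP_{k+1}^L\mathcal B>0$ separately (Proposition~\ref{prop1}) via the same backward induction on $P_k^L\ge0$ that you outline. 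The only cosmetic differences are that the paper keeps the trace terms outside $V$ and that it does not attempt (or need) the positivity of $\Omega_k^L,\Omega_k^R,P_k^R$; only $P_k^L\ge0$ is used, while $S^R+(B^L)^TP_{k+1}^RB^L>0$ is taken as part of the solvability hypothesis in \eqref{rere}, exactly as you note.
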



Before we give the proof of Theorem \ref{th-02}, we will show the following propositions, which will be useful in deriving the main results.

\begin{proposition}\label{prop1}
Under Assumption \ref{ass1}, $\Lambda^L+\mathcal {B}^TP_{k+1}^L\mathcal B$ is positive definite for $k=0, \cdots, N$.
\end{proposition}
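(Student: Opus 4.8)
The plan is to reduce the statement to the positive semidefiniteness of the Riccati iterates $P_k^L$ and then obtain the latter by backward induction. First note that, by Assumption \ref{ass1}, $\Lambda^L=\mathrm{diag}(S^L,M^L)>0$; hence, once $P_{k+1}^L\ge 0$ is known, the congruence $\mathcal B^{T}P_{k+1}^L\mathcal B\ge 0$ gives $\Lambda^L+\mathcal B^{T}P_{k+1}^L\mathcal B\ge\Lambda^L>0$, which is precisely the claim. So the entire content of the proof is to show that $P_{k+1}^L\ge 0$ for every $k=0,\dots,N$, i.e. that $P_j^L\ge 0$ for $j=1,\dots,N+1$.

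I would establish $P_j^L\ge 0$ by backward induction on $j$, running from $j=N+1$ down to $j=1$. The base case $P_{N+1}^L\ge 0$ is part of Assumption \ref{ass1}. For the inductive step, assume $P_{k+1}^L\ge 0$; by the observation above, $\Lambda^L+\mathcal B^{T}P_{k+1}^L\mathcal B>0$ is invertible, so the gain $K_k^L=-(\Lambda^L+\mathcal B^{T}P_{k+1}^L\mathcal B)^{-1}\mathcal B^{T}P_{k+1}^LA$ in \eqref{rere} is well defined; substituting it into the first line of \eqref{rere} yields the closed form
\[
P_k^L=Q^L+A^{T}P_{k+1}^LA-A^{T}P_{k+1}^L\mathcal B\,(\Lambda^L+\mathcal B^{T}P_{k+1}^L\mathcal B)^{-1}\mathcal B^{T}P_{k+1}^LA .
\]
The last two terms are the Schur complement, relative to the positive-definite block $\Lambda^L+\mathcal B^{T}P_{k+1}^L\mathcal B$, of the symmetric matrix $[A\ \ \mathcal B]^{T}P_{k+1}^L[A\ \ \mathcal B]+\mathrm{diag}(0,\Lambda^L)$, which is positive semidefinite (a congruence of $P_{k+1}^L\ge 0$ plus a positive semidefinite matrix) and whose bottom-right block is positive definite; hence that Schur complement is positive semidefinite, and adding $Q^L\ge 0$ gives $P_k^L\ge 0$. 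This closes the induction and proves the proposition. Equivalently, one can avoid Schur complements and complete the square: $v^{T}P_k^Lv=v^{T}Q^Lv+\min_{w}\{(Av+\mathcal Bw)^{T}P_{k+1}^L(Av+\mathcal Bw)+w^{T}\Lambda^Lw\}\ge 0$ for all $v\in\mathbb R^{n}$.

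The argument is essentially routine; the only point needing care is the bookkeeping of the induction, namely that the invertibility of $\Lambda^L+\mathcal B^{T}P_{k+1}^L\mathcal B$ — required both to define $K_k^L$ and to make sense of the Schur-complement identity — is itself supplied by the induction hypothesis $P_{k+1}^L\ge 0$, so no circularity arises. Accordingly, I do not anticipate a genuine obstacle; the same backward-induction template will also be what is needed for the companion positivity facts about $P_k^R$, $\Omega_k^L$, $\Omega_k^R$.
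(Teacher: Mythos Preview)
Your proposal is correct and follows essentially the same approach as the paper: backward induction establishing $P_{k}^L\ge 0$, with the key step being that $P_{k+1}^L\ge 0$ together with $\Lambda^L>0$ gives $\Lambda^L+\mathcal B^{T}P_{k+1}^L\mathcal B>0$. The only cosmetic difference is that the paper verifies $P_k^L\ge 0$ by rewriting the Riccati update in the closed-loop (Joseph-type) form $P_k^L=Q^L+(K_k^L)^T\Lambda^LK_k^L+(A+\mathcal B K_k^L)^TP_{k+1}^L(A+\mathcal B K_k^L)$, which is exactly your completing-the-square alternative, whereas your primary argument phrases the same computation as a Schur complement; these are equivalent. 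One small caveat: your closing remark that the same template handles $P_k^R$ is optimistic, since the $P_k^R$ recursion in \eqref{rere} couples to $\Omega_{k+1}^R$ and the paper simply \emph{assumes} $S^R+(B^L)^TP_{k+1}^RB^L>0$ rather than deriving it.
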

\begin{proof}
The backward induction method will be adopted to show that $\Lambda^L+\mathcal {B}^TP_{k+1}^L\mathcal B>0$ for $k=0, \cdots, N$.

Actually, from Assumption \ref{ass1}, we know that $P_{N+1}^L \geq 0$, and $\Lambda^L = \begin{bmatrix} S^L &  \\
 &  M^L \end{bmatrix} > 0$. Then, $\Lambda^L+\mathcal {B}^TP_{N+1}^L\mathcal B >0$ can be derived, and \eqref{rere} is solvable for $k=N$, that is
\begin{align}\label{pr1}
P_N^L&=Q^L+A^TP_{N+1}^LA-(K_N^L)^T(\Lambda^L+\mathcal {B}^TP_{k+1}^L\mathcal {B})K_N^L\notag\\
&=Q^L+A^TP_{N+1}^LA+(K_N^L)^T\mathcal {B}^TP_{N+1}^LA\notag\\
&+A^TP_{N+1}^L\mathcal {B}K_N^L+(K_N^L)^T(\Lambda^L+\mathcal {B}^TP_{k+1}^L\mathcal {B})K_N^L\notag\\
&=Q^L+(K_N^L)^T\Lambda^LK_N^L\notag\\
&+(A+\mathcal {B}K_N^L)^TP_{N+1}^L(A+\mathcal {B}K_N^L)
\end{align}
where $K_N^L$ is given as in \eqref{rere} for $k=N$.

Notice that $\Lambda^L>0$, $Q^L\geq0$, $P_{N+1}^L \geq 0$, then $P_N^L\geq 0$ can be obtained from \eqref{pr1}.

By repeating the above procedures step by step backwardly, we can conclude that $P_k^L\geq0$, which indicates that $\Lambda^L+\mathcal {B}^TP_{k+1}^L\mathcal B>0$ for any $0\leq k\leq N$.
\end{proof}

From the definition of the closed-loop Nash equilibrium (Definition \ref{def2}), the following two propositions are to be shown.
\begin{proposition}\label{prop2}
  Under Assumptions \ref{ass1}-\ref{ass2}, for the closed-loop Nash equilibrium $(K_k^{L}, K_k^{R})$, which minimizes $J_N^L(u_k^L,u_k^R)$ (see Definition \ref{def2}), if $\tilde u_k^{L,*}=K_k^{R}\tilde{x}_{k}$ is given in advance, then $U_k^*$ can be calculated as
  \begin{align}\label{mathu}
    U_k^*=K_k^L\hat{x}_{k|k},
  \end{align}
  in which $K_k^L$ is given by
  \begin{align}\label{kkl}
    K_k^{L}&=-(\Lambda^L+\mathcal {B}^TP_{k+1}^L\mathcal {B})^{-1}\mathcal {B}^T{P}_{k+1}^LA,
  \end{align}
  with $P_k^L$ satisfying
  \begin{align}
    P_k^L&= {A^T} P_{k+1}^LA-(K_k^L)^T(\Lambda^L+\mathcal {B}^TP_{k+1}^L\mathcal {B})K_k^L\notag\\
    &+ {Q^L},~ P_{N+1}^L,\label{pkl}\\
    \Omega_k^L&=p[(A + B^LK_k^R)^TP_{k+1}^L(A + {B}^LK_k^R)]\notag\\
&+(1-p)[(A + B^LK_k^R)^T\Omega_{k+1}^L(A + {B}^LK_k^R)]\notag\\
&+{Q^L}+ (K_k^R)^TS^LK_k^R,~~\Omega_{N+1}^L=P_{N+1}^L.\label{okl}
  \end{align}
  In this case, the optimal $J_N^L(u_k^{L,*}, u_k^{R,*})$ is given by \eqref{ocf1}.
\end{proposition}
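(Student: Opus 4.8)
The plan is to prove the identity by a completing-the-square argument run backward in time, treating it as a standard finite-horizon LQ problem once the state is split along the information structure. Since $\tilde u_k^{L,*}=K_k^R\tilde x_k$ is prescribed, the only free quantity is $U_k$, which by Assumption~\ref{ass2} is a linear feedback of $\hat x_{k|k}$. I would work with the equivalent forms \eqref{kw1} of the dynamics and \eqref{kw2} of $J_N^L$, together with the orthogonal decomposition $x_k=\hat x_{k|k}+\tilde x_k$ supplied by Lemma~\ref{lem3}, so that the minimization of $J_N^L$ reduces to choosing the feedback gain generating $U_k$.

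Concretely, I would introduce the candidate quadratic cost-to-go
\begin{align*}
G_k=\mathbb{E}\bigl[\hat x_{k|k}^TP_k^L\hat x_{k|k}+\tilde x_k^T\Omega_k^L\tilde x_k\bigr],
\end{align*}
with $P_k^L,\Omega_k^L$ defined by \eqref{pkl}--\eqref{okl} and terminal data $P_{N+1}^L$, $\Omega_{N+1}^L=P_{N+1}^L$, and establish by downward induction the one-step identity
\begin{align*}
&\mathbb{E}\bigl[x_k^TQ^Lx_k+U_k^T\Lambda^LU_k+(\tilde u_k^L)^TS^L\tilde u_k^L\bigr]+G_{k+1}\\
&=G_k+\mathbb{E}\bigl[(U_k-K_k^L\hat x_{k|k})^T(\Lambda^L+\mathcal B^TP_{k+1}^L\mathcal B)(U_k-K_k^L\hat x_{k|k})\bigr]\\
&\quad+p\,\mathrm{Tr}(\Sigma_wP_{k+1}^L)+(1-p)\,\mathrm{Tr}(\Sigma_w\Omega_{k+1}^L).
\end{align*}
To obtain this I would substitute the propagation rules of Lemma~\ref{lem3}: with the innovation $b_k:=(A+B^LK_k^R)\tilde x_k+w_k$ one has $\hat x_{k+1|k+1}=(A\hat x_{k|k}+\mathcal BU_k)+\gamma_{k+1}b_k$ and $\tilde x_{k+1}=(1-\gamma_{k+1})b_k$. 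Expanding $G_{k+1}$ and using the independence of $\gamma_{k+1}$ with $\mathbb{E}\gamma_{k+1}=\mathbb{E}\gamma_{k+1}^2=p$, the orthogonality of $\hat x_{k|k}$, $\tilde x_k$ and $w_k$, and Lemma~\ref{lem4} (which annihilates the $U_k$--$\tilde u_k^L$ cross terms), $G_{k+1}$ separates into a part depending only on $\hat x_{k|k}$ and $U_k$, a part depending only on $\tilde x_k$, and the two noise traces. Completing the square in $U_k$ in the $\hat x_{k|k}$-part, with $R_k:=\Lambda^L+\mathcal B^TP_{k+1}^L\mathcal B$ invertible and positive definite by Proposition~\ref{prop1} (so that $K_k^L=-R_k^{-1}\mathcal B^TP_{k+1}^LA$ of \eqref{kkl} is well defined), reproduces the recursion \eqref{pkl}, while gathering the $\tilde x_k$-terms with the running costs $\tilde x_k^TQ^L\tilde x_k$ and $(\tilde u_k^L)^TS^L\tilde u_k^L=\tilde x_k^T(K_k^R)^TS^LK_k^R\tilde x_k$ reproduces \eqref{okl}.

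Summing the one-step identity over $k=0,\dots,N$ telescopes $G_k$; recognizing $G_{N+1}=\mathbb{E}[x_{N+1}^TP_{N+1}^Lx_{N+1}]$ via $\Omega_{N+1}^L=P_{N+1}^L$ and orthogonality then yields
\begin{align*}
J_N^L(u_k^L,u_k^R)=\,&G_0+\sum_{k=0}^N\mathbb{E}\bigl[(U_k-K_k^L\hat x_{k|k})^TR_k(U_k-K_k^L\hat x_{k|k})\bigr]\\
&+p\sum_{k=0}^N\mathrm{Tr}(\Sigma_wP_{k+1}^L)+(1-p)\sum_{k=0}^N\mathrm{Tr}(\Sigma_w\Omega_{k+1}^L).
\end{align*}
Because $R_k>0$, the sum of squares is nonnegative and vanishes precisely when $U_k=K_k^L\hat x_{k|k}$ for every $k$; this feedback is admissible since $\hat x_{k|k}$ is $\mathcal F_k^R$-measurable, so it respects the information pattern required of $U_k$. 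Hence $U_k^*=K_k^L\hat x_{k|k}$ is the unique minimizer \eqref{mathu}, and inserting it leaves $J_N^L(u_k^{L,*},u_k^{R,*})=G_0+p\sum_k\mathrm{Tr}(\Sigma_wP_{k+1}^L)+(1-p)\sum_k\mathrm{Tr}(\Sigma_w\Omega_{k+1}^L)$, which is \eqref{ocf1}. I expect the main obstacle to be the bookkeeping in the one-step propagation step: separating the action of the packet-drop variable $\gamma_{k+1}$ on $\hat x_{k+1|k+1}$ versus $\tilde x_{k+1}$ and verifying that every cross term — those coupling $\hat x_{k|k}$ with $\tilde x_k$, $\tilde u_k^L$ and $w_k$, and those coupling $U_k$ with $\tilde u_k^L$ — integrates to zero, so that $G_{k+1}$ genuinely decouples along the orthogonal split. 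Once this is in place, the completion of square and the induction are routine LQ computations.
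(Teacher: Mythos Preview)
Your proposal is correct and follows essentially the same approach as the paper: define the quadratic value function $G_k=\mathbb{E}[\hat x_{k|k}^TP_k^L\hat x_{k|k}+\tilde x_k^T\Omega_k^L\tilde x_k]$, use the estimator/error propagation from Lemma~\ref{lem3} together with the orthogonality of Lemma~\ref{lem4} to compute the one-step difference, complete the square in $U_k$ via Proposition~\ref{prop1}, telescope, and read off \eqref{mathu} and \eqref{ocf1}. The only cosmetic difference is that the paper writes the telescoping identity as $V_N^L(k)-V_N^L(k+1)=\cdots$ rather than your equivalent form $\text{running cost}+G_{k+1}=G_k+\cdots$.
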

\begin{proof}
For the sake of discussion, we denote $V_N^L(\hat{x}_{k|k},\tilde{x}_k)$ as follows:
\begin{align}\label{vnlk}
  V_N^L(\hat{x}_{k|k},\tilde{x}_k)&=\mathbb{E}[\hat x_{k|k}^T P_k^L\hat {x}_{k|k}+\tilde x_k^T \Omega_k^L\tilde {x}_k],
\end{align}
where $P_k^L$, $\Omega_k^L$ satisfy the equations \eqref{pkl}-\eqref{okl}.

Noting that $K_k^{R}$ is given, i.e.,  $\tilde{u}_k^{L,*}=K_k^{R}\tilde{x}_{k}$, hence we have,
\begin{align}\label{lzhs1}
&V_N^L(\hat{x}_{k|k},\tilde{x}_k)- V_N^L(\hat{x}_{k+1|k+1},\tilde{x}_{k+1})\notag\\
&= \mathbb{E}[\hat x_{k|k}^TP_k^L{{\hat x}_{k|k}}+\tilde x_k^T \Omega_k^L\tilde {x}_k]\notag\\
&- \mathbb{E}[\hat x_{k + 1|k + 1}^T P_{k+1}^L{{\hat x}_{k + 1|k + 1}}]-\mathbb{E}[\tilde x_{k + 1}^T \Omega_{k+1}^L\tilde {x}_{k + 1}]\notag\\
&= \mathbb{E}[\hat x_{k|k}^TP_k^L\hat {x}_{k|k}+\tilde {x}_k^T \Omega_k^L\tilde {x}_k]\notag\\
&-\mathbb{E}[[\gamma _{k + 1}(A\hspace{-1mm}+\hspace{-1mm} B^LK_k^{R})\tilde {x}_k \hspace{-0.5mm}+\hspace{-0.5mm} \gamma _{k + 1}w_k\hspace{-1mm} +\hspace{-1mm} A\hat {x}_{k|k}
\hspace{-1mm}+\hspace{-1mm}\mathcal {B}U_k]^TP_{k+1}^L\notag\\
&\times [\gamma _{k + 1}(A\hspace{-1mm}+\hspace{-1mm} B^LK_k^{R})\tilde {x}_k \hspace{-0.5mm}+ \hspace{-0.5mm}\gamma _{k + 1}w_k\hspace{-1mm} +\hspace{-1mm} A\hat {x}_{k|k}
\hspace{-1mm}+\hspace{-1mm}\mathcal {B}U_k]]\notag\\
&- \mathbb{E}[[(1 - \gamma _{k + 1})(A + B^LK_k^{R})\tilde {x}_k +(1 - \gamma _{k + 1}) w_k]^T\Omega_{k+1}^L\notag\\
&\times[(1 - \gamma _{k + 1})(A + B^LK_k^{R})\tilde {x}_k +(1 - \gamma _{k + 1}) w_k]]\notag\\
&= \mathbb{E}[-(U_k - K_k^L\hat{x}_{k|k})^T(\Lambda^L + \mathcal {B}^TP_{k+1}^L\mathcal {B})\notag\\
&\times(U_k - K_k^L\hat{x}_{k|k})+ U_k^T\Lambda^LU_k]\notag\\
&+\mathbb{E}[\tilde x_k^T[\Omega_k^L \hspace{-0.5mm}- \hspace{-0.5mm}(1-p)(A\hspace{-0.5mm} +\hspace{-0.5mm} B^LK_k^{R})^T \Omega_{k+1}^L(A \hspace{-0.5mm}+\hspace{-0.5mm} B^LK_k^{R})\notag\\
&-p(A + B^LK_k^{R})^T P_{k+1}^L(A + B^LK_k^{R})]\tilde {x}_k]\notag\\
&+\mathbb{E}[\hat{x}_{k|k}^T [P_k^L-A^TP_{k+1}^LA\notag\\
&+(K_k^L)^T(\Lambda^L+\mathcal {B}^TP_{k+1}^L\mathcal {B})K_k^L]\hat{x}_{k|k}]\notag\\
&-(1-p)Tr(\Sigma _w\Omega_{k+1}^L)-pTr(\Sigma _wP_{k+1}^L).
\end{align}

Furthermore, from \eqref{kkl}-\eqref{okl}, it can be derived from \eqref{lzhs1} that
\begin{align}\label{lzhs2}
&V_N^L(\hat{x}_{k|k},\tilde{x}_k)- V_N^L(\hat{x}_{k+1|k+1},\tilde{x}_{k+1})\notag\\
&= \mathbb{E}[-(U_k - K_k^L\hat{x}_{k|k})^T(\Lambda^L + \mathcal {B}^TP_{k+1}^L\mathcal {B})\notag\\
&\times(U_k - K_k^L\hat{x}_{k|k})+ U_k^T\Lambda^LU_k]\notag\\
&+\mathbb{E}[\hat x_{k|k}^TQ^L\hat {x}_{k|k}]+\mathbb{E}[\tilde{x}_k^T[Q^L+(K_k^{R})^TS^LK_k^{R}]\tilde{x}_k]\notag\\
&-(1 - p)Tr(\Sigma _w\Omega_{k+1}^L)-pTr(\Sigma _wP_{k+1}^L).
\end{align}

Taking summation on both sides of \eqref{lzhs2} from $k=0$ to $k=N$, there holds
\begin{align}\label{lzhs3}
&V_N^L(\hat{x}_{0|0},\tilde{x}_0)- V_N^L(\hat{x}_{N+1|N+1},\tilde{x}_{N+1})\notag\\
&= \mathbb{E}[{\hat x_{0|0}}^TP_0^L\hat x_{0|0}+\tilde x_0^T\Omega_0^L\tilde {x}_0]\notag\\
&- \mathbb{E}[\hat x_{N+1|N+1}^TP_{N+1}^L\hat {x}_{N+1|N+1}+\tilde x_{N+1}^T\Omega_{N+1}^L\tilde {x}_{N + 1}]\notag\\
&= \sum\limits_{k = 0}^N  \mathbb{E}[-(U_k - K_k^L\hat{x}_{k|k})^T(\Lambda^L + \mathcal {B}^TP_{k+1}^L\mathcal {B})\notag\\
&\times(U_k - K_k^L\hat{x}_{k|k})+ U_k^T\Lambda^LU_k]\notag\\
&+ \mathbb{E}[\hat x_{k|k}^TQ^L\hat {x}_{k|k}] + \mathbb{E}[\tilde x_k^T[Q^L+(K_k^{R})^TS^LK_k^{R}]\tilde
{x}_k]\notag\\
&-(1 - p)\sum\limits_{k = 0}^N Tr(\Sigma _w \Omega_{k+1}^L)- p\sum\limits_{k = 0}^N Tr(\Sigma _wP_{k+1}^L).
\end{align}
Then, from \eqref{kw2} we have that
\begin{align}\label{ocf3}
&J_N^L(u_k^L, u_k^R)=\sum\limits_{k = 0}^N \mathbb{E}[\hat x_{k|k}^T{Q^L}{{\hat x}_{k|k}}+U_k^T\Lambda^LU_k\notag\\
& + \tilde x_k^T[Q^L+(K_k^{R})^TS^LK_k^{R}]\tilde {x}_k]+\mathbb{E}[x_{N + 1}^TP_{N+1}^Lx_{N + 1}]\notag\\
&= \sum\limits_{k = 0}^N \mathbb{E}[(U_k - K_k^L\hat{x}_{k|k})^T(\Lambda^L + \mathcal {B}^TP_{k+1}^L\mathcal {B})\notag\\
&\times(U_k - K_k^L\hat{x}_{k|k})]+ \mathbb{E}[\hat x_{0|0}^TP_0^L\hat x_{0|0}+\tilde x_0^T\Omega_0^L\tilde {x}_0]\notag\\
&+ (1 - p)\sum\limits_{k = 0}^N Tr(\Sigma _w \Omega_{k+1}^L)+ p\sum\limits_{k = 0}^NTr(\Sigma _wP_{k+1}^L).
\end{align}

As shown in Proposition \ref{prop1}, $\Lambda^L+\mathcal{B}^TP_{k+1}^L\mathcal{B}>0$, therefore, $J_N^L(u_k^L, u_k^R)$ can be minimized by \eqref{mathu} with $\tilde{u}_k^{L,*}=K_k^{R}\tilde{x}_{k}$ given in advance. This completes the proof.
\end{proof}

Similarly, the following proposition can be shown.
\begin{proposition}\label{prop3}
Suppose Assumptions \ref{ass1}-\ref{ass2} hold, and denote $(K_k^{L}, K_k^{R})$ as the closed-loop Nash equilibrium of optimizing $J_N^R(u_k^L,u_k^R)$, if we set $ U_k^*=K_k^L\hat{x}_{k|k}$ in advance, then $ \tilde{u}_k^{L,*}$ is given by
  \begin{align}\label{tildeu}
   \tilde{u}_k^{L,*}&=K_k^{R}\tilde{x}_{k}.
  \end{align}
In the above, $K_k^R$ satisfies
{\small\begin{align}\label{kkr}
\left\{ \begin{array}{ll}
K_k^{R}&\hspace{-2mm}=-[S^R+(B^L)^T{P}_{k+1}^RB^L]^{-1}(B^L)^T{P}_{k+1}^RA,\\
P_k^R&\hspace{-2mm}={A^T} P_{k+1}^RA + Q^R-(K_k^R)^T[S^R+(B^L)^TP_{k+1}^RB^L]K_k^R\\
&+p[(A + B^LK_k^R)^T \Omega_{k+1}^R(A + B^LK_k^R)\\
&- {(A + B^LK_k^R)^T} P_{k+1}^R(A + B^LK_k^R)],\\
\Omega_k^R&\hspace{-2mm}=(A + \mathcal {B}K_k^L)^T \Omega_{k+1}^R(A + \mathcal {B}K_k^L)\\
&+ Q^R+ (K_k^L)^T\Lambda^RK_k^L
\end{array} \right.
\end{align}}
Furthermore, $J_N^R(u_k^{L}, u_k^{R})$ can be minimized as \eqref{ocf2}.
\end{proposition}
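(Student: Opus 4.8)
The plan is to prove Proposition \ref{prop3} by the same completing-square / backward-value-function argument that establishes Proposition \ref{prop2}, but with the roles of the estimator $\hat x_{k|k}$ and the estimation error $\tilde x_k$ (and of the matrix families $P^R$ and $\Omega^R$) interchanged, because here $U_k=K_k^L\hat x_{k|k}$ is frozen and $\tilde u_k^L$ is the only free variable. First I would substitute $U_k=K_k^L\hat x_{k|k}$ into the decomposed dynamics \eqref{kw1} and use Lemma \ref{lem3} to record the two one-step recursions
\begin{align*}
\hat x_{k+1|k+1}&=(A+\mathcal BK_k^L)\hat x_{k|k}+\gamma_{k+1}(A\tilde x_k+B^L\tilde u_k^L+w_k),\\
\tilde x_{k+1}&=(1-\gamma_{k+1})(A\tilde x_k+B^L\tilde u_k^L+w_k),
\end{align*}
so that the common driving term $A\tilde x_k+B^L\tilde u_k^L+w_k$ is routed into the two channels by the complementary indicators $\gamma_{k+1}$ and $1-\gamma_{k+1}$. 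Then I would introduce the candidate value function
\[
V_N^R(\hat x_{k|k},\tilde x_k)=\mathbb{E}[\hat x_{k|k}^T\Omega_k^R\hat x_{k|k}+\tilde x_k^TP_k^R\tilde x_k],
\]
with $\Omega_k^R,P_k^R$ given by \eqref{kkr} and terminal data $\Omega_{N+1}^R=P_{N+1}^R$, $P_{N+1}^R$ as in \eqref{cf2}; note the swap relative to \eqref{vnlk}: the error $\tilde x_k$, which the local player still steers through $\tilde u_k^L$, now carries the matrix $P_k^R$ (the analogue of $P^L$ in Proposition \ref{prop2}), while $\hat x_{k|k}$ carries $\Omega_k^R$, in agreement with \eqref{ocf2}.

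The core step is to compute the one-step decrement $V_N^R(\hat x_{k|k},\tilde x_k)-V_N^R(\hat x_{k+1|k+1},\tilde x_{k+1})$ by expanding the two stage-$(k+1)$ quadratic forms and taking expectations. Here I would use that $\gamma_{k+1}$ is i.i.d. Bernoulli$(p)$ and independent of $(\hat x_{k|k},\tilde x_k,\tilde u_k^L,w_k)$, so $\mathbb{E}[\gamma_{k+1}]=\mathbb{E}[\gamma_{k+1}^2]=p$ and $\mathbb{E}[(1-\gamma_{k+1})^2]=1-p$; the orthogonality of $\hat x_{k|k}$ and $\tilde x_k$ from Lemma \ref{lem3}, together with $\tilde u_k^L=\tilde K_k^R\tilde x_k$ under Assumption \ref{ass2}, which annihilates all cross terms between $\hat x_{k|k}$ and $\tilde x_k$; and the zero-mean independence of $w_k$, which annihilates the remaining cross terms and leaves the trace contributions $pTr(\Sigma_w\Omega_{k+1}^R)+(1-p)Tr(\Sigma_wP_{k+1}^R)$. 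After regrouping, the decrement collapses to a quadratic in $\hat x_{k|k}$ that vanishes identically by the $\Omega_k^R$ recursion in \eqref{kkr}, a quadratic in $(\tilde x_k,\tilde u_k^L)$, and the trace terms. Completing the square in $\tilde u_k^L$ in that block then produces $-(\tilde u_k^L-K_k^R\tilde x_k)^T[S^R+(B^L)^TP_{k+1}^RB^L](\tilde u_k^L-K_k^R\tilde x_k)$ with $K_k^R$ exactly as in \eqref{kkr}, while the residual quadratic in $\tilde x_k$ cancels against $P_k^R$ by the $P_k^R$ recursion in \eqref{kkr}.

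Summing the resulting identity over $k=0,\dots,N$, telescoping, and inserting it into the decomposed cost \eqref{kw3} (using, under the expectation, $x_k^TQ^Rx_k=\hat x_{k|k}^TQ^R\hat x_{k|k}+\tilde x_k^TQ^R\tilde x_k$, the identity $U_k^T\Lambda^RU_k=\hat x_{k|k}^T(K_k^L)^T\Lambda^RK_k^L\hat x_{k|k}$, and $\mathbb{E}[x_{N+1}^TP_{N+1}^Rx_{N+1}]=V_N^R(\hat x_{N+1|N+1},\tilde x_{N+1})$), I would arrive at
\begin{align*}
J_N^R(u_k^L,u_k^R)&=\sum_{k=0}^N\mathbb{E}\big[(\tilde u_k^L-K_k^R\tilde x_k)^T[S^R+(B^L)^TP_{k+1}^RB^L](\tilde u_k^L-K_k^R\tilde x_k)\big]\\
&\quad+\mathbb{E}[\hat x_{0|0}^T\Omega_0^R\hat x_{0|0}+\tilde x_0^TP_0^R\tilde x_0]+p\sum_{k=0}^NTr(\Sigma_w\Omega_{k+1}^R)+(1-p)\sum_{k=0}^NTr(\Sigma_wP_{k+1}^R).
\end{align*}
Since \eqref{kkr} contains the condition $S^R+(B^L)^TP_{k+1}^RB^L>0$, this sum is minimized uniquely at $\tilde u_k^{L,*}=K_k^R\tilde x_k$, which is \eqref{tildeu}, and the minimum equals \eqref{ocf2}. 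I expect the main obstacle to lie in the second step: the driving term $A\tilde x_k+B^L\tilde u_k^L+w_k$ feeds \emph{both} $\hat x_{k+1|k+1}$ (through $\gamma_{k+1}$) and $\tilde x_{k+1}$ (through $1-\gamma_{k+1}$), so one must carefully track which of $\Omega_{k+1}^R$ and $P_{k+1}^R$ weights the $p$- and $(1-p)$-scaled copies of this term and then verify that, after invoking the recursions in \eqref{kkr}, precisely the gain $K_k^R$, the weight $S^R+(B^L)^TP_{k+1}^RB^L$, and the constant of \eqref{ocf2} emerge. A secondary subtlety is the positivity of the completed-square weight: unlike $P_k^L$ in Proposition \ref{prop1}, the matrix $P_k^R$ need not be positive semidefinite, owing to the indefinite $p[\Omega_{k+1}^R-P_{k+1}^R]$ correction in \eqref{kkr}, so this positivity must be drawn from the standing solvability hypothesis on \eqref{rere} rather than re-derived by backward induction.
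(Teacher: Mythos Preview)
Your proposal follows essentially the same approach as the paper's proof: define the value function $V_N^R(\hat x_{k|k},\tilde x_k)=\mathbb{E}[\hat x_{k|k}^T\Omega_k^R\hat x_{k|k}+\tilde x_k^TP_k^R\tilde x_k]$, compute its one-step decrement via the recursions for $\hat x_{k+1|k+1}$ and $\tilde x_{k+1}$ from Lemma~\ref{lem3}, complete the square in $\tilde u_k^L$, telescope, and minimize using the positivity of $S^R+(B^L)^TP_{k+1}^RB^L$. The paper's derivation (its equations corresponding to the decrement, the simplification via \eqref{rere}, the telescoped sum, and the final completed-square expression for $J_N^R$) matches your outline almost line by line, including the swapped roles of $P^R$ and $\Omega^R$ relative to Proposition~\ref{prop2} and the observation that the positivity of $S^R+(B^L)^TP_{k+1}^RB^L$ must be taken from the solvability hypothesis on \eqref{rere} rather than proved separately.
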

\begin{proof}
  By following Proposition \ref{prop2} and its proof, we define
\begin{align*}
V_N^R(\hat{x}_{k|k},\tilde{x}_k) = \mathbb{E}[\hat x_{k|k}^T \Omega_k^R\hat {x}_{k|k} + \tilde x_k^TP_k^R\tilde {x}_k],
\end{align*}
in which $P_k^R$, $\Omega_k^R$ satisfy \eqref{kkr}.

Hence, it can be derived that
\begin{align}\label{rzhs1}
&V_N^R(\hat{x}_{k|k},\tilde{x}_k)-V_N^R(\hat{x}_{k+1|k+1},\tilde{x}_{k+1})\notag\\
&= \mathbb{E}[\hat x_{k|k}^T \Omega_k^R\hat {x}_{k|k} + \tilde x_k^TP_k^R\tilde {x}_k]\notag\\
&- \mathbb{E}[\hat x_{k + 1|k + 1}^T\Omega_{k+1}^R\hat x_{k + 1|k + 1}]- \mathbb{E}[\tilde x_{k +1}^TP_{k+1}^R\tilde {x}_{k + 1}]\notag\\
&= \mathbb{E}[\hat x_{k|k}^T \Omega_k^R{{\hat x}_{k|k}} + \tilde x_k^TP_k^R\tilde {x}_k]\notag\\
&- \mathbb{E}[[\gamma _{k + 1}(A\tilde {x}_k\hspace{-0.5mm} +\hspace{-0.5mm} B^L\tilde {u}_k^L\hspace{-0.5mm} +\hspace{-0.5mm} w_k)\hspace{-0.5mm} +\hspace{-0.5mm} (A+\mathcal BK_k^{L})\hat {x}_{k|k}]^T\Omega_{k+1}^R\notag\\
&\times [{\gamma _{k + 1}}(A\tilde {x}_k\hspace{-0.5mm} +\hspace{-0.5mm} B^L\tilde {u}_k^L\hspace{-0.5mm} +\hspace{-0.5mm} w_k)\hspace{-0.5mm} +\hspace{-0.5mm} (A+\mathcal BK_k^{L})\hat {x}_{k|k}]]\notag\\
&- \mathbb{E}[[(1 - \gamma _{k + 1})(A\tilde {x}_k + B^L\tilde {u}_k^L+
{w_k})]^TP_{k+1}^R\notag\\
&\times[(1 - \gamma _{k + 1})(A\tilde {x}_k + {B^L}\tilde {u}_k^L + w_k)]]\notag\\
&= \mathbb{E}[\hat x_{k|k}^T \Omega_k^R\hat {x}_{k|k} + \tilde x_k^TP_k^R\tilde {x}_k]-\mathbb{E}[\tilde{x}_k^TA^TP_{k+1}^RA\tilde{x}_k]\notag\\
&-\mathbb{E}[\hat{x}_{k|k}^T(A+\mathcal {B}K_k^{L})^T\Omega_{k+1}^R(A+\mathcal {B}K_k^{L})\hat{x}_{k|k}]\notag\\
&-\mathbb{E}[(\tilde{u}_k^L-K_k^R\tilde{x}_k)^T[S^R+(B^L)^TP_{k+1}^RB^L](\tilde{u}_k^L-K_k^R\tilde{x}_k)]\notag\\
&+\mathbb{E}[(\tilde{u}_k^L)^TS^R\tilde{u}_k^L]\hspace{-1mm}+\hspace{-1mm}\mathbb{E}[p(A\tilde{x}_k\hspace{-1mm}+\hspace{-1mm}B^L\tilde{u}_k^L)^TP_{k+1}^R(A\tilde{x}_k\hspace{-1mm}+\hspace{-1mm}B^L\tilde{u}_k^L)\notag\\
&-p(A\tilde{x}_k+B^L\tilde{u}_k^L)^T\Omega_{k+1}^R(A\tilde{x}_k+B^L\tilde{u}_k^L)]\notag\\
&+\mathbb{E}[\tilde{x}_k^T(K_k^R)^T[S^R+(B^L)^TP_{k+1}^RB^L]K_k^R\tilde{x}_k]\notag\\
&-pTr(\Sigma _w\Omega_{k+1}^R) - (1 - p)Tr(\Sigma _wP_{k+1}^R).
\end{align}
Next, by using \eqref{rere}, we obtain
\begin{align}\label{rzhs2}
&V_N^R(\hat{x}_{k|k},\tilde{x}_k)-V_N^R(\hat{x}_{k+1|k+1},\tilde{x}_{k+1})\notag\\
&=-\mathbb{E}[(\tilde{u}_k^L\hspace{-0.5mm}-\hspace{-0.5mm}K_k^R\tilde{x}_k)^T[S^R\hspace{-0.5mm}+\hspace{-0.5mm}(B^L)^TP_{k+1}^RB^L](\tilde{u}_k^L\hspace{-0.5mm}-\hspace{-0.5mm}K_k^R\tilde{x}_k)]\notag\\
&+\mathbb{E}[(\tilde{u}_k^L)^TS^R\tilde{u}_k^L]+ \mathbb{E}[\hat x_{k|k}^T[{Q^R} + (K_k^{L})^T\Lambda^RK_k^{L}]{\hat
x}_{k|k}]\notag\\
&+\mathbb{E}[\tilde {x}_k^TQ^R\tilde {x}_k]-pTr(\Sigma _w\Omega_{k+1}^R) - (1 - p)Tr(\Sigma _wP_{k+1}^R).
\end{align}

Then, by adding from $k=0$ to $k=N$ of \eqref{rzhs2}, there holds
\begin{align}\label{rzhs3}
&V_N^R(\hat{x}_{0|0},\tilde{x}_0)-V_N^R(\hat{x}_{N+1|N+1},\tilde{x}_{N+1})\notag\\
&= \mathbb{E}[\hat x_{0|0}^T\Omega_0^R\hat {x}_{0|0} + \tilde x_0^TP_0^R\tilde {x}_0]\notag\\
&- \mathbb{E}[\hat x_{N + 1|N + 1}^T\Omega_{N+1}^R{\hat x_{N + 1|N + 1}}- \tilde x_{N + 1}^TP_{N+1}^R\tilde {x}_{N + 1}]\notag\\
&= \sum\limits_{k = 0}^N-\mathbb{E}[(\tilde{u}_k^L-K_k^R\tilde{x}_k)^T[S^R+(B^L)^TP_{k+1}^RB^L]\notag\\
&\times(\tilde{u}_k^L-K_k^R\tilde{x}_k)]+\mathbb{E}[(\tilde{u}_k^L)^TS^R\tilde{u}_k^L+\tilde x_k^TQ^R\tilde {x}_k]\notag\\
&+ \mathbb{E}[\hat x_{k|k}^T[Q^R + (K_k^{L})^T\Lambda^RK_k^{L}]{{\hat
x}_{k|k}}]\notag\\
&-pTr(\Sigma _w\Omega_{k+1}^R) - (1 - p)Tr(\Sigma _wP_{k+1}^R).
\end{align}

Finally, from \eqref{kw3}, we have
\begin{align}\label{ocf4}
&J_N^R(u_k^{L}, u_k^{R})= \sum\limits_{k = 0}^N \mathbb{E}[\tilde x_k^TQ^R\tilde {x}_k+(\tilde{u}_k^L)^TS^R\tilde{u}_k^L] \notag\\
&+\mathbb{E}[ \hat x_{k|k}^T[Q^R + (K_k^R)^T{S^L}K_k^R]\hat {x}_{k|k}]+ \mathbb{E}[x_{N + 1}^TP_{N+1}^Rx_{N + 1}] \notag\\
&= \sum\limits_{k = 0}^N \mathbb{E}[(\tilde{u}_k^L-K_k^R\tilde{x}_k)^T[S^R+(B^L)^TP_{k+1}^RB^L]\notag\\
&\times(\tilde{u}_k^L-K_k^R\tilde{x}_k)]+ \mathbb{E}[\hat x_{0|0}^T\Omega_0^R\hat {x}_{0|0} + \tilde
x_0^TP_0^R\tilde {x}_0]\notag\\
&+p\sum\limits_{k = 0}^N Tr(\Sigma _w\Omega_{k+1}^R) + (1 - p)\sum\limits_{k = 0}^N Tr(\Sigma _wP_{k+1}^R).
\end{align}

Since $S^R+(B^L)^T{P}_{k+1}^RB^L>0$ given in \eqref{rere}, thus $J_N^R(u_k^{L}, u_k^{R})$ can be minimized by \eqref{tildeu}. The proof is complete.
\end{proof}

In the following, the proof of Theorem \ref{th-02} shall be given.\\
\begin{proof}
  \textbf{Proof of Theorem \ref{th-02}.} Combining Propositions \ref{prop2}-\ref{prop3}, we can conclude that if the coupled Riccati equations \eqref{rere} is solvable, then $(K_k^L, K_k^R)$ given in \eqref{mathu} and \eqref{tildeu} is the unique closed-loop Nash equilibrium, as given in \eqref{clne1}-\eqref{clne2} of Definition \ref{def2}). Therefore, from \eqref{ncl}, we know that the optimal action of $(u_k^{L,*},u_k^{R,*})$ can be given as \eqref{occ1}-\eqref{occ2}. Moreover, the optimal $J_N^L(u_k^{L,*}, u_k^{R,*}), J_N^R(u_k^{L,*}, u_k^{R,*})$ are given as \eqref{ocf1}-\eqref{ocf2}, this ends the proof.
\end{proof}

\begin{remark}
 In Theorem \ref{th-02}, the closed-loop Nash equilibrium of Problem LRSNG is derived in the feedback form, the obtained results are new to the best of our knowledge. For the closed-loop Nash equilibrium of Problem LRSNG, on the one hand, the local player $u_k^{L}$ should take the closed-loop optimal Nash equilibrium $u_k^{L,*}$ if and only if the remote player takes the closed-loop optimal feedback Nash equilibrium $u_k^{R,*}$. Meanwhile, the optimal cost function is given as the value of \eqref{ocf1}. On the other hand, the remote player $u_k^{R}$ should take the closed-loop optimal Nash equilibrium $u_k^{R,*}$ if and only if the local player takes the closed-loop optimal feedback Nash equilibrium $u_k^{L,*}$, and the optimal cost function \eqref{ocf2} is also derived.
 Besides, the calculation of closed-loop Nash equilibrium is based on the coupled Riccati equations.
\end{remark}

\section{Numerical Example}
\begin{figure}
  \centering
  \includegraphics[width=0.38\textwidth]{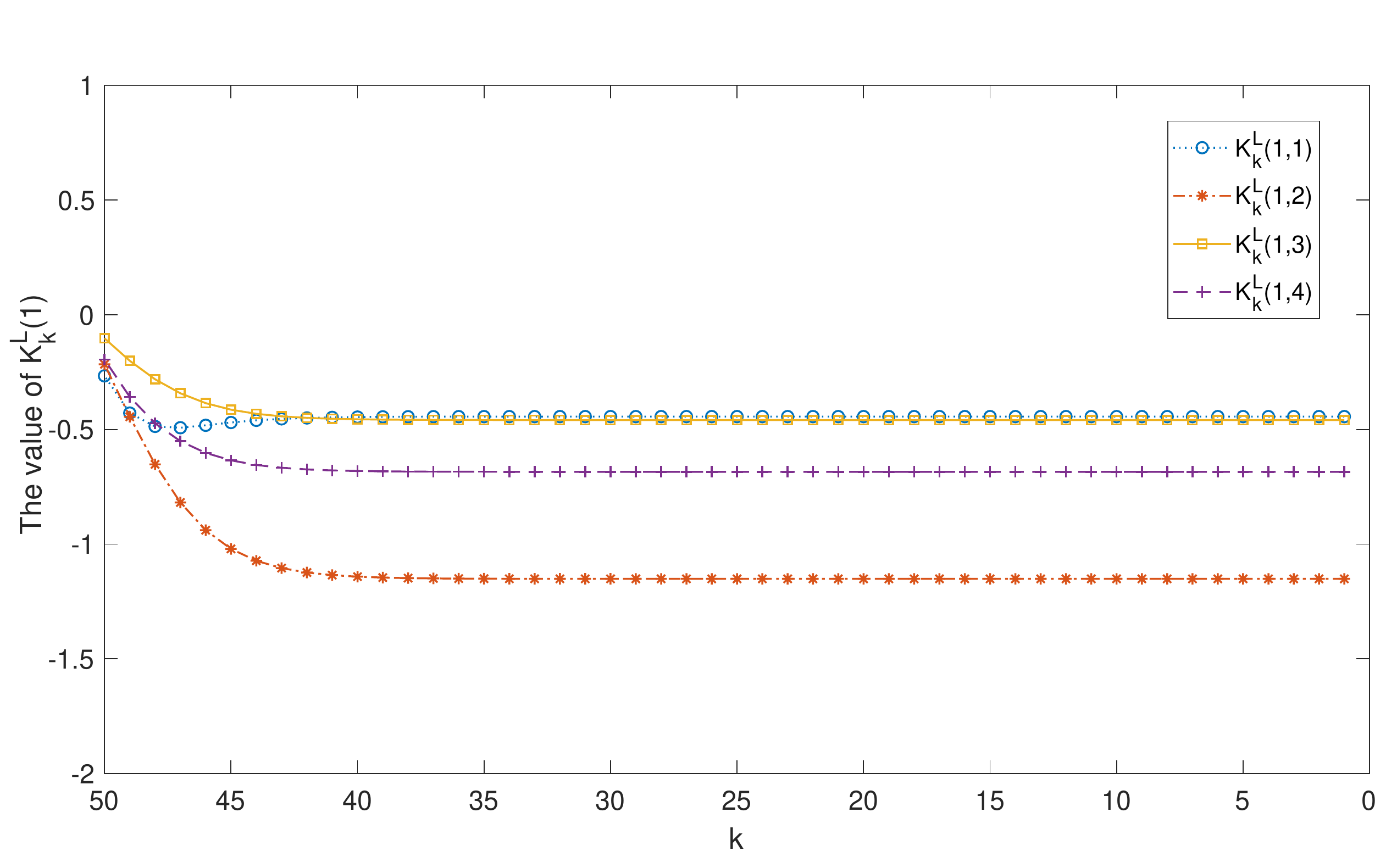}\\
  \caption{Closed-loop Nash equilibrium: $K_k^L(1,i), i=1,\cdots,4$, the first column value of $K_k^L$.}\label{Figure2}
\end{figure}
\begin{figure}
  \centering
  \includegraphics[width=0.38\textwidth]{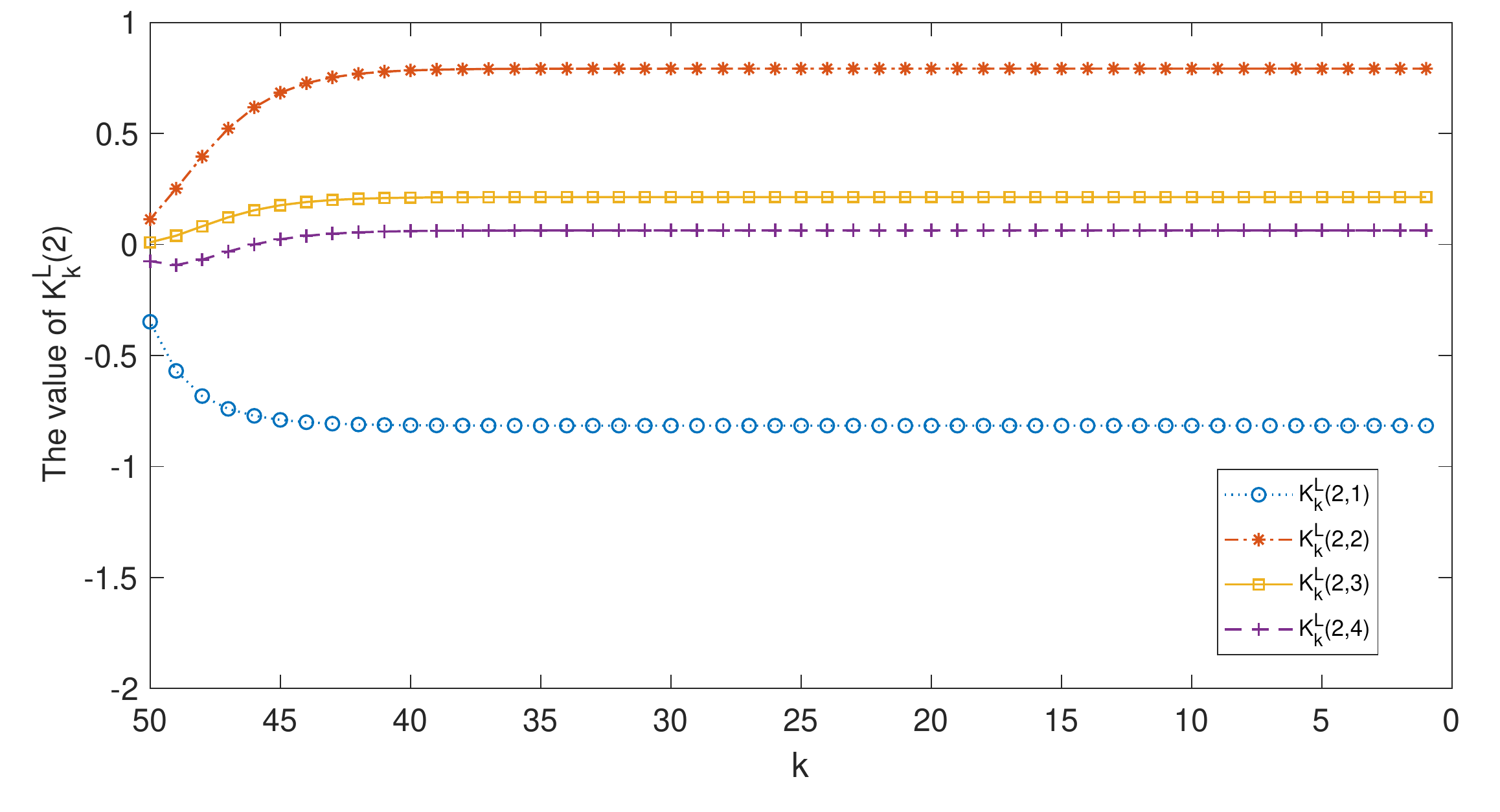}\\
  \caption{Closed-loop Nash equilibrium: $K_k^L(2,i), i=1,\cdots,4$, the second column value of $K_k^L$.}\label{Figure3}
\end{figure}
\begin{figure}
  \centering
  \includegraphics[width=0.38\textwidth]{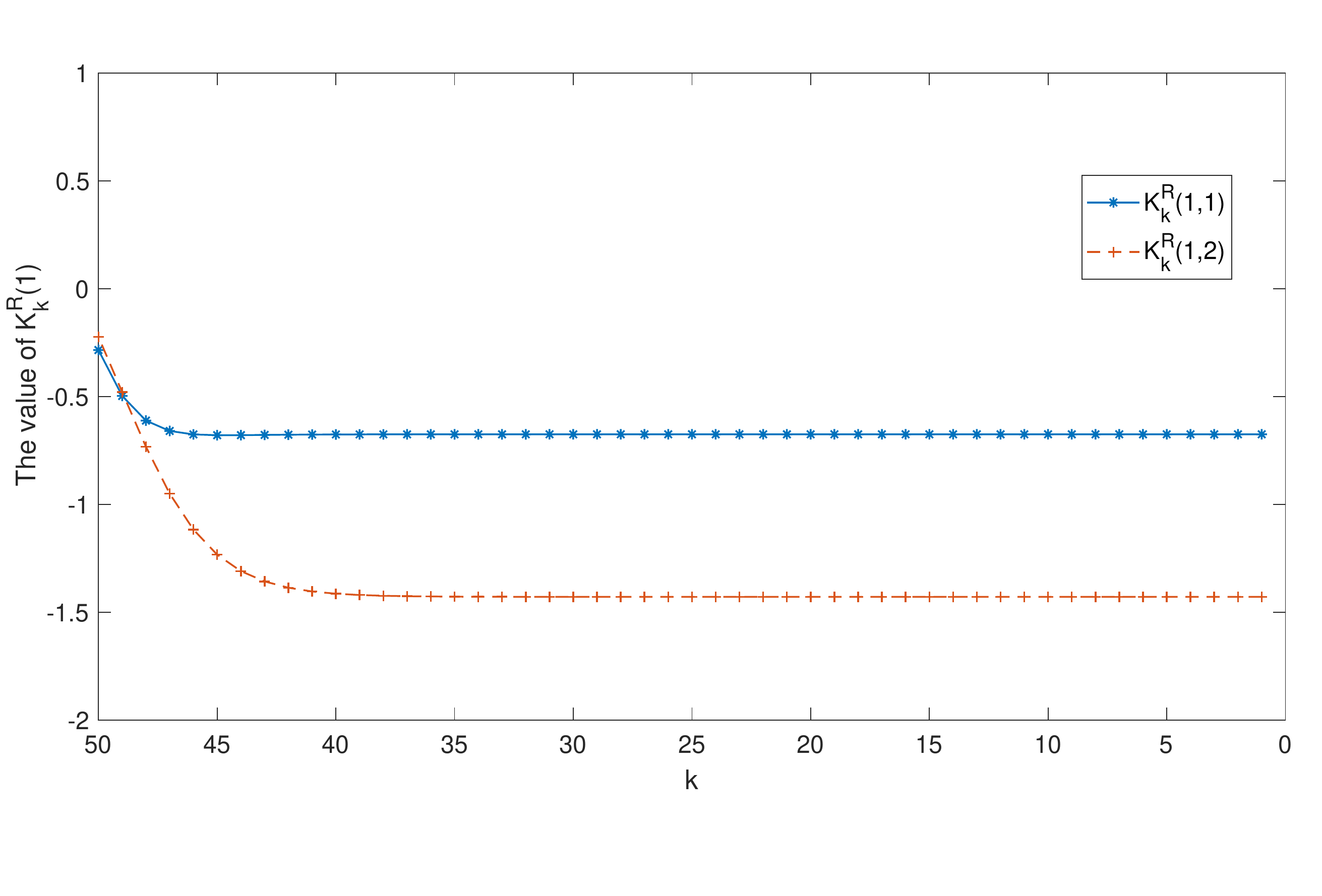}\\
  \caption{Closed-loop Nash equilibrium: $K_k^R(1,i), i=1,2$, the first column value of $K_k^R$.}\label{Figure4}
\end{figure}
\begin{figure}
  \centering
  \includegraphics[width=0.38\textwidth]{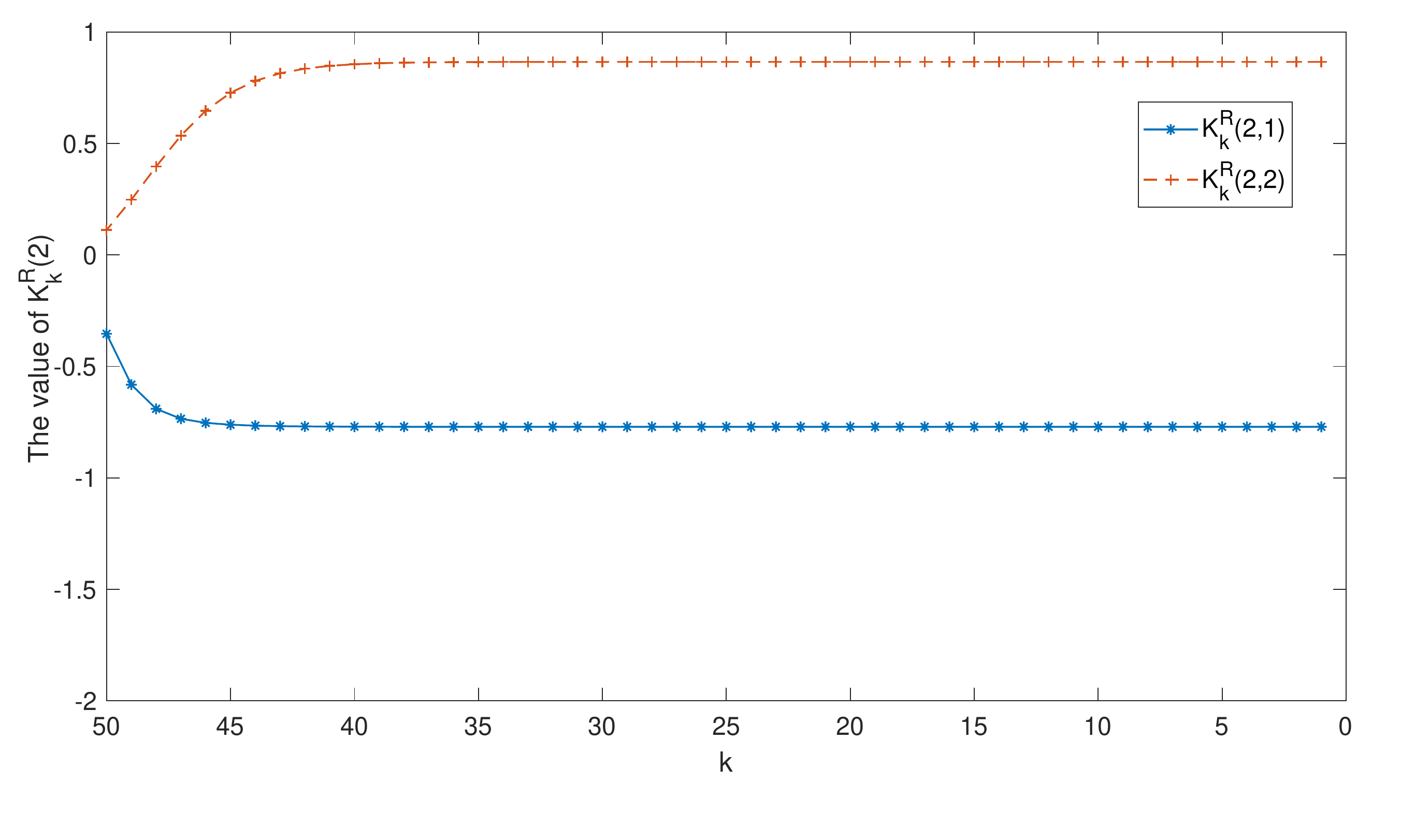}\\
  \caption{Closed-loop Nash equilibrium: $K_k^R(2,i), i=1,2$, the second column value of $K_k^R$.}\label{Figure5}
\end{figure}

In order to illustrate the main results obtained in Theorem \ref{th-02}, a numerical example is provided in this section.

Without loss of generallity, we shall consider the system dynamics \eqref{ss1} and the cost functions \eqref{cf1}-\eqref{cf2} with the following coefficients:
\begin{align}\label{coes}
&N=50, p=0.5, \mu=0,A= \begin{bmatrix}
{1.2}&{0}\\
{0}&{1.1}
\end{bmatrix};\notag\\
&B^L= \begin{bmatrix}
{0.3}&{0.2}\\
{0.4}&{-0.1}
\end{bmatrix},
B^R= \begin{bmatrix}
{0.1}&{0.2}\\
{0}&{0.1}
\end{bmatrix};\notag\\
&Q^L=Q^R=S^L=S^R=M^L=M^R=I_2;\notag\\
&\Lambda^L=\begin{bmatrix} S^L &  \\
 &  M^L \end{bmatrix}=I_4, P^L_{N+1}=P^R_{N+1}=I_2.
\end{align}

From the above coefficients in \eqref{coes}, obviously, Assumption \ref{ass1} is satisfied. Consequently, $P_k$ can be calculated from \eqref{rere} backwardly using \eqref{coes}, then it can be verified that $\Lambda^L+\mathcal {B}^TP_{k+1}^L\mathcal {B}$ and $S^R+(B^L)^TP_{k+1}^RB^L$ are positive definite for $k=1, \cdots, 50$. Using Theorem \ref{th-02}, we can conclude that the closed-loop Nash equilibrium of Problem LRSNG is unique, and $(K_k^L, K_k^R)$, $k=0, \cdots, 50$ can be calculated from \eqref{rere}, which are shown in Figures \ref{Figure2}-\ref{Figure5}, backwardly.

As can be seen from Figures \ref{Figure2}-\ref{Figure5}, it is apparent that the closed-loop Nash equilibrium $(K_k^L,K_k^R)$ would be convergent with $N$ becomes large.

\section{Conclusion}

In this paper, we have discussed the open-loop and closed-loop local and remote Nash equilibrium for LRSNG problem for discrete-time stochastic systems with inconsistent information structure. This paper extends the existing works on LQ stochastic games with consistent information structure to the inconsistent information structure case. Both the open-loop and closed-loop Nash equilibrium have been derived in this paper, and a numerical example is given to illustrate the main results. We believe the proposed methods and results would shed a light in solving other kinds of stochastic game problem with inconsistent information structure.

\ifCLASSOPTIONcaptionsoff
  \newpage
\fi

\end{document}